\newcommand\version{August 27, 2023}
\newtheorem{theorem}{Theorem}
\newtheorem{proposition}[theorem]{Proposition}
\newtheorem{conjecture}{Conjecture}
\newtheorem*{conjecture*}{Conjecture}
\theoremstyle{definition}
\newcommand{\N}{\mathbb{N}}
\newcommand{\Z}{\mathbb{Z}}
\newcommand{\R}{\mathbb{R}}
\newcommand{\C}{\mathbb{C}}
\newcommand{\E}{\mathbf{E}}
\newcommand{\bs}{\mathbb{S}}
\newcommand\e{\mathrm{e}}
\newcommand\I{\mathrm{i}}
\newcommand{\F}{\mathcal{F}}
\newcommand\re{\operatorname{Re}}
\newcommand\im{\operatorname{Im}}
\newcommand{\ce}{\mathcal{E}}
\newcommand{\cv}{\mathcal{V}}
\newcommand{\cs}{\mathcal{S}}
\newcommand{\ch}{\mathcal{H}}
\newcommand\eps\varepsilon
\renewcommand\epsilon\varepsilon
\renewcommand\rho\varrho
\newcommand\lm\lambda
\newcommand{\supp}{\operatorname{supp}}
\newcommand{\dist}{\operatorname{dist}}
\newcommand{\rd}{d}
\newcommand{\me}[1]{\mathrm{e}^{#1}}
\newcommand{\one}{\mathbf{1}}
\newcommand{\beq}{\begin{equation}}
\newcommand{\eeq}{\end{equation}}
\newcommand{\be}{\begin{equation*}}
\newcommand{\ee}{\end{equation*}}
\newcommand{\bmat}{\begin{pmatrix}}
\newcommand{\emat}{\end{pmatrix}}
\begin{document}
\title[Eigenvalues of random Schr\"odinger operators --- \version]{Lieb--Thirring-type inequalities for random Schr\"odinger operators with complex potentials}

\author[J.-C.\ Cuenin]{Jean-Claude Cuenin}
\address[Jean-Claude Cuenin]{Department of Mathematical Sciences, Loughborough University, Loughborough, Leicestershire LE11 3TU, United Kingdom}
\email{J.Cuenin@lboro.ac.uk}

\author[K. Merz]{Konstantin Merz}
\address[Konstantin Merz]{Institut f\"ur Analysis und Algebra, Technische Universit\"at Braunschweig, Universit\"atsplatz 2, 38106 Braun\-schweig, Germany, and Department of Mathematics, Graduate School of Science, Osaka University, Toyonaka, Osaka 560-0043, Japan}
\email{k.merz@tu-bs.de}

\date{\version}

\begin{abstract}
  We review some results and proofs on eigenvalue bounds for random Schr\"odinger operators with complex-valued potentials. We also include new Schatten norm estimates for the resolvent and use them to obtain bounds for sums of eigenvalues.
\end{abstract}

\thanks{
  The second-named author thanks Neal Bez and Yutaka Terasawa for the organization and invitation to the RIMS Symposium on Harmonic Analysis and Nonlinear Partial Differential Equations, where parts of this note were presented.
  This work was supported by the Research Institute for Mathematical Sciences, an International Joint Usage/Research Center located in Kyoto University.
  Support by the Engineering \& Physical Sciences Research Council [grant numberEP/X011488/1] (J.-C. C.) and by the PRIME programme of the German Academic Exchange Service (DAAD) with funds from the German Federal Ministry of Education and Research (BMBF) (K.M.) is acknowledged.
  We thank Haruya Mizutani for helpful discussions.}

\maketitle
\tableofcontents

\section{Introduction}

\subsection{Eigenvalue estimates for Schr\"odinger operators with complex potentials}
Schr\"odinger operators with complex-valued potentials arise in the analysis of scattering resonances, open or damped quantum systems, etc. For decaying potentials $V:\R^d\to\C$ that are not too singular, $-\Delta-V$ can be realized as an m-sectorial operator in $L^2(\R^d)$, with essential spectrum $[0,\infty)$; see, e.g., \cite[Proposition~B.2]{Frank2018E}\footnote{In the present situation, all definitions of essential spectrum coincide.}.
We are interested in the discrete eigenvalues of finite algebraic multiplicity, $\sigma_{\rm disc}(-\Delta-V)$, which can only accumulate at $[0,\infty)$. Natural quantities of interest are, e.g., the number and distribution of eigenvalues of $-\Delta-V$. For complex-valued $V$, the analysis of these quantities yields results that are surprisingly different from those in the case of real-valued $V$. We refer to \cite[Section~5.13]{Frank2021} and the references therein for a survey.

In the first part of this note, we review estimates for single eigenvalues of $-\Delta-V$ which only depend on $\|V\|_q$.
In the second part, we prove new estimates for sums of eigenvalues. 
As in previous studies on this subject, the decisive feature of $V$ is its decay.
Hence, for the rest of the paper, we will always assume that $V$ is bounded.

To motivate the sought-after estimates, let us consider the situation of real-valued $V$ for a moment. In this case, the Keller--Lieb--Thirring inequality
\begin{align}
  \label{eq:classiclt}
  \sum_{\lambda_j\in\sigma_{\rm disc}(-\Delta-V)}|\lambda_j|^{q-d/2} \lesssim_{d,q} \int_{\R^d}V(x)_+^q\,dx, \quad
  \begin{cases}
    q\in[1,\infty), & \ d=1 \\
    q\in(1,\infty), & \ d=2 \\
    q\in[d/2,\infty), & \ d\geq3
  \end{cases}
\end{align}
quantifies the eigenvalue accumulation at zero when $q>d/2$ and the number of eigenvalues when $q=d/2$ and $d\geq3$. Here and below, we write $A\lesssim B$ for two non-negative quantities $A,B\geq 0$ to indicate that there is a constant $C>0$ such that $A\leq C B$. The dependence of the constant on fixed parameters like $d$ and $q$ is usually omitted.
The notation $A\sim B$ means $A\lesssim B\lesssim A$.
Inequality \eqref{eq:classiclt} for single eigenvalues was first proved by Keller \cite{Keller1961} and later generalized to eigenvalue sums by Lieb and Thirring \cite{LiebThirring1976}, who used this inequality to give a simple proof of stability of quantum matter.
Inequality~\eqref{eq:classiclt} is interesting because it is invariant under spatial rescalings and recovers Weyl's law in the strong-coupling limit, i.e., when one replaces $V$ with $\kappa V$, and considers the limit $\kappa\to\infty$.

A natural question to ask is how much of \eqref{eq:classiclt} survives when $V$ is complex-valued. A first answer in the context of single eigenvalues was given by Abramov, Aslanyan, and Davies \cite{Abramovetal2001}. For $d=1$, they showed
\begin{align}
  |z|^{1/2} \leq \frac12 \int_\R|V|\,dx, \quad z\in\sigma_{\rm disc}(-\Delta-V).
\end{align}
Shortly afterwards, Frank, Laptev, Lieb, and Seiringer \cite{Franketal2006} considered the higher-dimensional case and $V\in L^q(\R^d)$, $q\geq d/2$, and showed
\begin{align}
  \label{eq:flls}
  \sum_{z_j\in\sigma_{\rm disc}(-\Delta-V),\,|\im z_j|\geq\kappa\re z_j} |z_j|^{q-\frac d2}
  \lesssim \left(1+\kappa^{-1}\right)^{q} \int_{\R^d}|V|^q.
\end{align}
While this inequality shows that \eqref{eq:classiclt} continues to hold for eigenvalues of $-\Delta-V$ outside any fixed sector in the complex plane, \eqref{eq:flls} deteriorates for eigenvalues close to $[0,\infty)$.
For individual eigenvalues, Laptev and Safronov \cite[p.~31]{LaptevSafronov2009} made the following

\begin{conjecture}
  \label{ls}
  Let $d\geq2$, $q\in(d/2,d]$. Then, any eigenvalue $z\in\C\setminus[0,\infty)$ of $-\Delta-V$ satisfies
  \begin{align}
    \label{eq:ls}
    |z|^{q-\frac{d}{2}} \lesssim_{d,q} \int_{\R^d}|V|^{q}.
  \end{align}
\end{conjecture}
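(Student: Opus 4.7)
The plan is to reduce the conjectured eigenvalue bound to a uniform resolvent estimate via the Birman--Schwinger principle. Write $V = V^{1/2}\cdot |V|^{1/2}$ with $V^{1/2}:=V/|V|^{1/2}$. If $z\in\C\setminus[0,\infty)$ is an eigenvalue of $-\Delta-V$, then the Birman--Schwinger principle produces $-1$ as an eigenvalue of
\[
K(z) := |V|^{1/2}\,(-\Delta-z)^{-1}\,V^{1/2}\colon L^2(\R^d)\to L^2(\R^d),
\]
so in particular $\|K(z)\|_{L^2\to L^2}\geq 1$.

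I would then bound $\|K(z)\|_{L^2\to L^2}$ by factoring through $L^p$ and $L^{p'}$, with dual exponents $p=\tfrac{2q}{q+1}$ and $p'=\tfrac{2q}{q-1}$ chosen so that $\tfrac{1}{p}-\tfrac{1}{p'}=\tfrac{1}{q}$. H\"older's inequality applied to the two multiplication operators gives
\[
\|K(z)\|_{L^2\to L^2}\leq \|V\|_q\cdot \|(-\Delta-z)^{-1}\|_{L^p\to L^{p'}},
\]
and a standard scaling argument in the Laplacian reduces the resolvent norm to its value on the unit circle,
\[
\|(-\Delta-z)^{-1}\|_{L^p\to L^{p'}} = |z|^{\frac{d}{2q}-1}\,\|(-\Delta-z/|z|)^{-1}\|_{L^p\to L^{p'}}.
\]

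The crux is then a uniform bound for $\|(-\Delta-\zeta)^{-1}\|_{L^p\to L^{p'}}$ as $\zeta$ ranges over the unit circle approached from either side of the cut $[0,\infty)$. This is provided by the Kenig--Ruiz--Sogge uniform Sobolev inequality, which holds precisely when $\tfrac{2}{d+1}\leq \tfrac{1}{p}-\tfrac{1}{p'}\leq \tfrac{2}{d}$, corresponding in our parametrization to $q\in[d/2,(d+1)/2]$. Combining the estimates yields $1\leq C\,\|V\|_q\,|z|^{d/(2q)-1}$, which rearranges to the conjectured $|z|^{q-d/2}\lesssim \int|V|^q$. This is, in essence, Frank's argument establishing Conjecture~\ref{ls} in the restricted range $q\in[d/2,(d+1)/2]$.

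The main obstacle is extending the argument past the Stein--Tomas threshold $q=(d+1)/2$ to the full conjectured range $(d/2,d]$. For $q>(d+1)/2$ the KRS estimate fails: the operator norm $\|(-\Delta-\zeta)^{-1}\|_{L^p\to L^{p'}}$ diverges as $\zeta$ approaches the positive real axis, so no bound depending only on $\|V\|_q$ can follow from Birman--Schwinger combined with a pure $L^p\to L^{p'}$ resolvent estimate. Progress in this regime would need to exploit finer information --- for instance oscillatory cancellation against the specific eigenfunction, weighted or thin-shell restriction estimates adapted to $\zeta$, or additional structural hypotheses on $V$ beyond its $L^q$-norm. Closing this gap is the substantive content of the conjecture and has been the focus of considerable recent activity.
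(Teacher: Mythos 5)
This statement is a \emph{conjecture} in the paper, not a theorem, and the paper does not (and cannot) give a proof of it in the stated generality. Your write-up correctly reproduces Frank's argument: Birman--Schwinger, H\"older to factor through $L^p\to L^{p'}$, rescaling to put $z$ on the unit circle, and the Kenig--Ruiz--Sogge uniform Sobolev inequality, which is valid exactly for $q\in[d/2,(d+1)/2]$. That part is accurate, and your identification of the Knapp/Stein--Tomas obstruction at $q=(d+1)/2$ matches the paper's discussion. However, your final paragraph misdiagnoses the status of the problem: you present the range $q\in((d+1)/2,d]$ as an open gap that might be closed by ``finer information'' such as oscillatory cancellation or weighted restriction estimates. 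In fact the paper explicitly states the opposite resolution: B\"ogli and Cuenin \cite{BogliCuenin2023} constructed (Knapp-inspired) potentials $V\in L^q(\R^d)$ with $q>(d+1)/2$ for which \eqref{eq:ls} \emph{fails}, so the Laptev--Safronov conjecture is false for general $V$ in that range and no refinement of the Birman--Schwinger plus $L^q$-only argument can rescue it. (It does hold for radial $V$ when $q<d$ by Frank--Simon, and B\"ogli showed that range is sharp even for radial potentials.) The content the paper is pursuing is not a proof of the conjecture but rather its recovery under \emph{additional structural hypotheses}---specifically Anderson-type randomization, as in Theorems~\ref{thm. 1} and \ref{thm. 3}---which is a qualitatively different route than the ones you sketch. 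So the precise gap in your proposal is the assumption that the conjecture is still open and provable beyond $(d+1)/2$; it is not, and any purported proof in that regime would contradict the known counterexample.
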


Laptev and Safronov were careful to state this conjecture only for $q\leq d$ because they were aware of the real-valued Wigner--Neumann potential \cite[p.~223]{ReedSimon1978}. This is a radial potential which oscillates and decays like $|x|^{-1}$, i.e., it almost belongs to $L^d(\R^d)$, and generates an eigenvalue embedded at $1$.
%
B\"ogli \cite{Bogli2017} proved that the range of $q$ would be best possible for radial potentials. Frank and Simon \cite{FrankSimon2017} proved Conjecture~\ref{ls} for radial potentials when $q<d$. Disregarding the endpoint $q=d$, these two results provide a complete answer to the Laptev--Safronov conjecture for radial potentials.
For general potentials, Frank \cite{Frank2011} proved
\begin{align}
  \label{eq:franklt}
  |z|^{q-d/2} \lesssim \int_{\R^d}|V(x)|^q\,dx, \quad z\in\sigma_{\rm disc}(-\Delta-V), \ q\leq\frac{d+1}{2}.
\end{align}
The starting point to prove \eqref{eq:franklt} is the Birman--Schwinger principle, i.e., the statement that
\begin{align}
  z\in\sigma_{\rm disc}(-\Delta-V)
  \Leftrightarrow
  1 \in \sigma_{\rm disc}(BS(z)),
  \quad BS(z):=|V|^{1/2}(-\Delta-z)^{-1}V^{1/2},
\end{align}
with $V^{1/2}=V/|V|^{1/2}$.
In particular, if $z\in\sigma_{\rm disc}(-\Delta-V)$, then the spectral radius ${\rm spr}(BS(z))\geq1$. Thus, to get an eigenvalue estimate, it suffices to bound ${\rm spr}(BS(z))$ from above by a negative power of $|z|$. A simple estimate is ${\rm spr}(BS(z))\leq\|BS(z)\|$.
By H\"older's inequality and the homogeneity of $-\Delta$,
\begin{align}
  \begin{split}
    1
    \leq {\rm spr}(BS(z))\leq\|BS(z)\|
    & \leq \|V\|_q \|(-\Delta-z)^{-1}\|_{p\to p'} \\
    & = |z|^{d/(2q)-1} \|V\|_q \|(-\Delta-z/|z|)^{-1}\|_{p\to p'},
  \end{split}
\end{align}
where $1/q=1/p-1/p'$ and $p'=(1-1/p)^{-1}$ for $1\leq p\leq\infty$.
Kenig, Ruiz, and Sogge \cite{Kenigetal1987} showed $\|(-\Delta-z/|z|)^{-1}\|_{p\to p'}\lesssim_{p,d}1$ for all $z\in\C\setminus[0,\infty)$ if and only if $q\in[d/2,(d+1)/2]$.
Thus, \eqref{eq:franklt} follows.
Let us comment on the necessity of $q\in[d/2,(d+1)/2]$. The lower bound is a consequence of Sobolev's inequality. We explain the necessity of $q\leq(d+1)/2$. By Stone's formula and the spectral theorem or Plancherel's theorem, $\im(-\Delta-1+i0)^{-1}$ is proportional to the operator $\ce\ce^*$, i.e., the composition of the Fourier restriction operator
\begin{align}
  \cs(\R^d) \ni f\mapsto (\ce^* f)(\xi) := \int_{\R^d}dx\, \me{-2\pi ix\cdot\xi}f(x)\Big|_{\xi\in\bs^{d-1}} \in L^\infty(\bs^{d-1})
\end{align}
and the Fourier extension operator
\begin{align}
  L^\infty(\bs^{d-1}) \ni g\mapsto (\ce g)(x) := \int_{\bs^{d-1}}d\sigma(\xi) \me{2\pi ix \cdot \xi}g(\xi) \in \cs'(\R^d),
\end{align}
with the Leray surface measure $\sigma$ on $\bs^{d-1}$.
In the following, we will also write $\hat{f}(\xi):=(\F f)(\xi):=\int_{\R^d}\me{-2\pi i x\cdot\xi}f(x)\,dx$ for the Fourier transform of $f$ and $(f)^{\vee}(x)=\int_{\R^d}\me{2\pi i x\cdot\xi}f(\xi)\,d\xi$ for the inverse Fourier transform.
Thus, to investigate necessary conditions for the $L^p\to L^{p'}$-boundedness of $(-\Delta-z)^{-1}$, it suffices to investigate necessary conditions for $p$ on the $L^p(\R^d)\to L^2(\bs^{d-1})$-boundedness of $\ce^*$. On the one hand, by the Riemann--Lebesgue lemma, $\|\ce^*\|_{L^1(\R^d)\to L^2(\bs^{d-1})}\lesssim1$. On the other hand, since the Fourier transform maps $L^2(\R^d)$ unitarily into itself, a Fourier restriction of $L^2(\R^d)$-functions is meaningless, since $L^2(\R^d)$-functions belong to an equivalence class of functions within which its members are allowed to differ from each other on Lebesgue measure zero sets, i.e., in particular on hypersurfaces like $\bs^{d-1}$. Considering the Fourier transform of an indicator function on a rectangular box $[0,\epsilon^{-1}]^{d-1}\times[0,\epsilon^{-2}]$ and exploiting the quadratic curvature of the sphere, one observes $p\leq2(d+1)/(d+3)$ is a necessary condition for $\ce^*$ to be $L^p(\R^d)\to L^2(\bs^{d-1})$-bounded---this is known as Knapp's example \cite{Tomas1975,Strichartz1977}. In particular, Knapp's example shows the necessity of $q\leq(d+1)/2$ in the resolvent bound of Kenig, Ruiz, and Sogge, and led to the belief that Frank's bound \eqref{eq:franklt} is optimal. Inspired by Knapp's construction, B\"ogli and the first author \cite{BogliCuenin2023} succeeded to construct potentials $V\in L^q(\R^d)$ with $q>(d+1)/2$ for which \eqref{eq:ls} is violated.

\subsection{Eigenvalue estimates for Schr\"odinger operators with random com\-plex potentials}

While the results \cite{Bogli2017,FrankSimon2017,Frank2011,BogliCuenin2023} provide a complete answer to the Laptev--Safronov conjecture~\ref{ls}, it is interesting to investigate if the range of exponents in \eqref{eq:franklt} can be increased for ``generic'' $V$.
Here, we investigate how strong one must perturb or destroy the counterexample of \cite{Bogli2017,BogliCuenin2023} to recover a Keller--Lieb--Thirring inequality.
A rather strong way to do so is to take this counterexample, or, more generally, any given deterministic potential, decomposing its support into boxes of side length $h>0$, and multiplying it on every $h$-cube with a mean-zero random variable. More precisely, for independently and identically distributed, mean-zero Gaussian or symmetric Bernoulli random variables $(\omega_j)_{j\in h\Z^d}$, we consider the Anderson randomization of $V$, given by
\begin{align}
  \label{randomization of deterministic V}
  V_{\omega}(x) = \sum_{j\in h\Z^d}\omega_jV(x)\mathbf{1}_{Q}((x-j)/h), \quad Q=[0,1)^d.
\end{align}
We denote the product measure associated to the $\omega_j$ by $\mathbf{P}$ and the expectation by $\mathbf{E}$.

Our results in \cite{CueninMerz2022}, Theorems~\ref{thm. 1}--\ref{thm. 3} below, say that Frank's eigenvalue estimate \eqref{eq:franklt} continues to hold for random potentials that are allowed to decay almost twice as slowly as their deterministic counterparts. We state this result in two forms. The first form involves compactly supported potentials with $\supp V\subseteq B(R)$.
We write $\langle x\rangle:=2+|x|$ for $x\in\R^d$, $B(x,R):=\{y\in\R^d:\,|x-y|<R\}$, and $B(R)$ for a ball with radius $R$ and unspecified center.

\begin{theorem}[{\cite[Theorem~1]{CueninMerz2022}}]
  \label{thm. 1}
  There exist constants $M_0,c>0$ such that the following holds. For any $R,\lambda>0$, $0<h<R$, $|\epsilon|\ll \lambda$, $q\leq d+1$, for any $V\in L^q(\R^d)$ with $\supp V\subseteq B(R)$, and for any $M\geq M_0$, each eigenvalue $z=(\lambda+\I\epsilon)^2$ of $-\Delta-V_{\omega}$ satisfies 
  \begin{align}
    \label{eq. thm. 1}
    \frac{\lambda^{2-\frac{d}{q}}}{\langle \lambda h\rangle^{d/2}(\ln \langle \lambda R\rangle)^{7/2}}\leq M\|V\|_{L^{q}(\R^d)},
  \end{align}
  except for $\omega$ in a set of measure at most $\exp(-cM^2)$.
\end{theorem}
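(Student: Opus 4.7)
The plan is to combine the Birman--Schwinger principle with a probabilistic improvement of the deterministic resolvent bound that underpins \eqref{eq:franklt}. First, if $z=(\lambda+\I\epsilon)^2\in\sigma_{\rm disc}(-\Delta-V_\omega)$ then the Birman--Schwinger principle gives $\spr(BS_\omega(z))\geq 1$ for the compact operator $BS_\omega(z) := |V_\omega|^{1/2}(-\Delta-z)^{-1}V_\omega^{1/2}$; since the spectral radius is bounded by the operator norm, it suffices to prove that
\begin{equation*}
\|BS_\omega(z)\|_{L^2\to L^2} \leq CM\|V\|_q\,\lambda^{d/q-2}\langle\lambda h\rangle^{d/2}(\ln\langle\lambda R\rangle)^{7/2}
\end{equation*}
with probability at least $1-\exp(-cM^2)$.

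Next I would linearize $BS_\omega(z)$ in the random variables. Because the cubes $Q_j:=j+hQ$ are disjoint and, for Bernoulli $\omega_j\in\{\pm1\}$, $|V_\omega|^{1/2}=|V|^{1/2}\mathbf{1}_S$ while $V_\omega^{1/2}(x)=\omega_{j(x)}V^{1/2}(x)$, one has
\begin{equation*}
BS_\omega(z)=\sum_{j\in h\Z^d}\omega_jT_j(z),\qquad T_j(z):=|V|^{1/2}\mathbf{1}_S(-\Delta-z)^{-1}V^{1/2}\mathbf{1}_{Q_j},
\end{equation*}
and the Gaussian case reduces to this after separately controlling $\max_j|\omega_j|$. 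A noncommutative Khintchine / matrix-Bernstein inequality then produces, for any fixed $z$,
\begin{equation*}
\mathbf{P}\bigl(\|\textstyle\sum_j\omega_jT_j(z)\|>M\sigma(z)\bigr)\leq C\exp(-cM^2),\quad \sigma(z)^2:=\max\bigl(\|\textstyle\sum_jT_jT_j^*\|,\|\textstyle\sum_jT_j^*T_j\|\bigr),
\end{equation*}
with at most polynomial prefactors which are absorbed into the logarithmic correction.

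The analytic heart of the argument is the bound $\sigma(z)\lesssim\|V\|_q\lambda^{d/q-2}\langle\lambda h\rangle^{d/2}$ valid for $q\leq d+1$. A direct computation shows that $\sum_jT_jT_j^*$ collapses to the sandwiched resolvent $|V|^{1/2}\mathbf{1}_S(-\Delta-z)^{-1}|V|\mathbf{1}_S(-\Delta-\bar z)^{-1}|V|^{1/2}\mathbf{1}_S$, whereas the $T_j^*T_j$ are supported on the disjoint cubes $Q_j$, so $\|\sum_jT_j^*T_j\|=\max_j\|T_j\|^2$. Both terms can be controlled via Kenig--Ruiz--Sogge / Stein--Tomas type resolvent bounds localized at the scale $h$; the Fourier localization at scale $1/h$ extends the admissible range of $q$ up to $d+1$, while the Knapp obstruction at that same scale costs only the factor $\langle\lambda h\rangle^{d/2}$. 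Finally, to pass from the pointwise-in-$z$ estimate to a uniform bound over all candidate eigenvalues in the sector $\{z:\re z\sim\lambda^2,\,|\im z|\ll\re z\}$, I would cover this sector by an $\epsilon$-net of cardinality polynomial in $\lambda R$ (possible because $\supp V\subseteq B(R)$ makes $BS_\omega(\cdot)$ operator-Lipschitz in $z$ with constants polynomial in $\lambda R$) and take a union bound; the $(\ln\langle\lambda R\rangle)^{7/2}$ factor then arises by combining the $\sqrt{\log N}$ from the Khintchine step with the logarithm of the net cardinality and dyadic logarithms in $\lambda$ and $h$.

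The main obstacle will be the improved resolvent estimate in the penultimate step: proving the sharp $\langle\lambda h\rangle^{d/2}$-dependence throughout the full range $q\leq d+1$ requires delicate Fourier-restriction analysis at the Knapp scale, and it is precisely this step that converts the deterministic range $q\leq(d+1)/2$ of Frank's bound \eqref{eq:franklt} into the probabilistic range $q\leq d+1$; everything else is comparatively soft.
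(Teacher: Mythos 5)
There is a genuine gap in the proposed noncommutative Khintchine/matrix-Bernstein step, and it is fatal to the argument as stated. With your choice $T_j=|V|^{1/2}\mathbf{1}_S(-\Delta-z)^{-1}V^{1/2}\mathbf{1}_{Q_j}$ one has, exactly as you observe, $\sum_j T_jT_j^*=|V|^{1/2}\mathbf{1}_S(-\Delta-z)^{-1}|V|\mathbf{1}_S(-\Delta-\bar z)^{-1}|V|^{1/2}\mathbf{1}_S$; but this is precisely $BS(z)BS(z)^*$ for the \emph{deterministic} Birman--Schwinger operator $BS(z)=|V|^{1/2}(-\Delta-z)^{-1}V^{1/2}$, so $\sigma(z)^2\geq\|\sum_jT_jT_j^*\|=\|BS(z)\|^2$. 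Consequently any matrix-Bernstein bound $\E\|\sum_j\omega_jT_j\|\lesssim\sigma(z)\sqrt{\log(\cdot)}$ can never beat the deterministic Kenig--Ruiz--Sogge bound, which is exactly what one must do for $q>(d+1)/2$; for such $q$, $\|BS(z)\|$ blows up as $\im z\to0$, so the variance parameter already spoils the estimate, and no ``Fourier localization at scale $1/h$'' inside the estimate of $\sigma(z)$ can fix this, because the sum over $j$ has already collapsed the $h$-localization away. The randomization gain in this problem is a genuine square-root cancellation that generic variance-based matrix concentration does not detect.

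The paper circumvents exactly this obstruction in two ways you do not use. First, it does not bound $\spr(BS_\omega(z))$ by $\|BS_\omega(z)\|$ but invokes Gelfand's formula \eqref{eq:gelfand}; after smoothing the resolvents at scale $R^{-1}$, this reduces $\spr(BS_\omega(z))$ to the operator norm of the ``elementary operator'' $C^{(\delta)}(D)V_\omega C^{(\delta')}(D)$, where the \emph{full} random potential sits between two Fourier multipliers (rather than having the deterministic $|V_\omega|^{1/2}$ eat half the randomness, as happens in $BS_\omega(z)$ itself). Second, after foliating by spheres as in \eqref{eq:foliation}, the paper estimates $\E\|\ce_\lambda^*V_\omega\ce_{\lambda'}\|=\E\sup_{g,g'}|\langle\ce g,V_\omega\ce g'\rangle|$ by discretizing the unit ball of $L^2(M)$, chaining via the telescoping decomposition \eqref{eq:telescope}, and controlling the entropy of the relevant nets through the dual Sudakov inequality \eqref{eq:dualsudakov}; the square-root cancellation is captured by the scalar subgaussian maximal inequality \eqref{eq:dudley} on each dyadic scale, and the $\sqrt{\log}$ factors accumulate into the $(\ln\langle\lambda R\rangle)^{7/2}$ in \eqref{eq. thm. 1}. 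This chaining/entropy structure is what extracts the improvement to $q\leq d+1$; it is not a soft consequence of a better resolvent estimate, and your proposal has no substitute for it.
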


While compactly supported potentials belong to any $L^q$-space, the point of \eqref{eq. thm. 1} is the weak (logarithmic) dependence on $R$. In contrast, the deterministic estimate~\eqref{eq:franklt} and H\"older's inequality yield a power law, i.e.,
\begin{align}\label{deterministic+Holder}
  \lambda^{\frac{2}{d+1}}\lesssim R^{d(\frac{2}{d+1}-\frac{1}{q})}\|V\|_{L^{q}}, \quad q\geq\frac{d+1}{2}.
\end{align}
In the next theorem, we consider $V\in L^q$, which are not assumed to be compactly supported anymore.
\begin{theorem}[{\cite[Theorem~3]{CueninMerz2022}}]
  \label{thm. 3}
  For any $q<d+1$, there exist constants $M_0,c>0$ such that the following holds. For any $h,\lambda>0$, $|\epsilon|\ll \lambda$, for any $V\in L^q(\R^d)$ and for any $M\geq M_0$, each eigenvalue $z=(\lambda+\I\epsilon)^2$ of $-\Delta-V_{\omega}$ satisfies 
  \begin{align}
    \label{eq. thm. 3}
    \frac{\lambda^{2-\frac{d}{q}}}{\langle \lambda h\rangle^{d/2}(\ln \langle \lambda h\rangle)^{2}}\leq M\|V\|_{L^{q}(\R^d)},
  \end{align}
  except for $\omega$ in a set of measure at most $\exp(-cM^2)$.
\end{theorem}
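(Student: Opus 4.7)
The plan is to apply the Birman--Schwinger principle and then control the random Birman--Schwinger operator probabilistically. If $z=(\lambda+\I\epsilon)^2$ is a discrete eigenvalue of $-\Delta-V_\omega$, then $1\leq\spr(BS_\omega(z))\leq\|BS_\omega(z)\|$, where $BS_\omega(z):=|V_\omega|^{1/2}(-\Delta-z)^{-1}V_\omega^{1/2}$. Using that $|\omega_j|=1$ in the Bernoulli case, we may write $|V_\omega|=|V|$ and $V_\omega^{1/2}=\omega V^{1/2}$, with $\omega(x):=\sum_{j\in h\Z^d}\omega_j\mathbf{1}_Q((x-j)/h)$. It therefore suffices to prove the tail bound $\mathbf{P}(\||V|^{1/2}(-\Delta-z)^{-1}\omega V^{1/2}\|\geq 1)\leq\e^{-cM^2}$ whenever the left-hand side of \eqref{eq. thm. 3} strictly exceeds $M\|V\|_q$.

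I would then expand the operator inside the norm as a Rademacher series $\sum_{j\in h\Z^d}\omega_j A_j(z)$, with $A_j(z):= |V|^{1/2}(-\Delta-z)^{-1}\mathbf{1}_{Q_j}V^{1/2}$, and split the resolvent into an off-sphere piece (frequency-localized away from $\{|\xi|=\lambda\}$) and a near-sphere piece of width $\sim\langle\lambda h\rangle^{-1}\lambda$. The off-sphere piece is dominated by the deterministic Kenig--Ruiz--Sogge bound and contributes an acceptable deterministic term that would already be enough in the classical range $q\leq(d+1)/2$, cf.\ \eqref{eq:franklt}. For the near-sphere piece, the key tool is a non-commutative Khintchine or Hanson--Wright-type concentration inequality, which reduces the operator-norm moments of $\sum_j\omega_j A_j(z)$ to the two matrix variances $\|\sum_j A_jA_j^*\|^{1/2}$ and $\|\sum_j A_j^*A_j\|^{1/2}$, modulo logarithmic factors and an $\varepsilon$-net argument to pass from a Schatten norm to the operator norm. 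Each variance should then be estimated via a bilinear or Strichartz-type restriction estimate counting how many extension wave packets of transverse width $\sim(\lambda h)^{-1/2}\lambda^{-1/2}$ fit into a single $h$-cube, which is the natural origin of the factor $\langle\lambda h\rangle^{d/2}$.

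The main obstacle, and the reason Theorem~\ref{thm. 3} is harder than Theorem~\ref{thm. 1}, is the absence of compact support: the Rademacher series runs over infinitely many cubes, so one cannot terminate an $\varepsilon$-net by a union bound over finitely many scales as in the compactly supported case. One must instead prove a \emph{global}, scale-invariant bound on the random Birman--Schwinger operator in terms of $\|V\|_q$, which will likely require Schatten-norm variants of the Kenig--Ruiz--Sogge bound (as announced in the abstract) combined with careful entropy estimates. The strict condition $q<d+1$ should emerge from the convergence of a dyadic frequency decomposition whose endpoint diverges at $q=d+1$, and the two powers of $\log\langle\lambda h\rangle$ in \eqref{eq. thm. 3} should arise, one from this dyadic frequency sum and one from the Gaussian-tail inversion $M\mapsto\e^{-cM^2}$ that follows the Khintchine/Hanson--Wright step.
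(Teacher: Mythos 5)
Your overall high-level plan (Birman--Schwinger, exploit square-root cancellation in the Rademacher series over $h$-cubes, interpolate with a deterministic bound to produce the logarithms) is in the right spirit, but the central probabilistic tool you propose cannot produce the factor $\langle\lambda h\rangle^{d/2}$, which is the whole point of the theorem. Writing $A_j = |V|^{1/2}R(z)\mathbf{1}_{Q_j}V^{1/2}$ as you do, one computes
\begin{align*}
  \sum_{j} A_j A_j^*
  = |V|^{1/2}R(z)\Big(\sum_{j}\mathbf{1}_{Q_j}|V|\mathbf{1}_{Q_j}\Big)R(z)^*|V|^{1/2}
  = |V|^{1/2}R(z)\,|V|\,R(z)^*|V|^{1/2},
\end{align*}
since the cubes tile $\R^d$; this variance is \emph{independent of $h$}, and its square root has the same operator norm as the deterministic Birman--Schwinger operator $|V|^{1/2}R(z)|V|^{1/2}$. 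Any non-commutative Khintchine or Hanson--Wright bound dominated by $\max\bigl(\|\sum A_jA_j^*\|^{1/2},\|\sum A_j^*A_j\|^{1/2}\bigr)$ therefore collapses back to the deterministic Kenig--Ruiz--Sogge range $q\le(d+1)/2$ (up to logarithms) and cannot see the randomization scale $h$ at all. The $h$-dependence in the paper does \emph{not} come from a variance-type bound: it enters through the chaining/entropy argument via the elementary count \eqref{cardinality of i}, namely that at most $h^d$ points of a unit-separated net fall inside one $h$-cube; this controls the $\psi_2$-norm of the \emph{scalar} random variables $X_{\xi,\xi'}$ at fixed $(\xi,\xi')$ in \eqref{eq:localextensionboundptwisepotentialaux3}, and it is the dual Sudakov inequality \eqref{eq:dualsudakov} controlling the chaining entropy that then closes the argument. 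Replacing the $\varepsilon$-net--plus--Khintchine step with Bourgain's chaining is not a cosmetic change but the essential mechanism.

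Two further gaps. First, the paper does not bound $\spr(BS(z))$ by $\|BS(z)\|$; it uses Gelfand's formula \eqref{eq:gelfand}, because the estimates rely on sandwiching each resolvent between \emph{two} spatial cutoffs from the decaying potential (as in \eqref{eq:smoothingresolvent}), and in a single $\|BS(z)\|$ only the half-powers $|V|^{1/2}$ and $V^{1/2}$ are available, which is not enough smoothing. Second, your treatment of non-compact support (``Schatten-norm variants of Kenig--Ruiz--Sogge combined with careful entropy estimates'') is too vague to be an argument: the paper handles it by a horizontal (Lorentz-type) dyadic decomposition $V=\sum_i V_i$ with controlled measure of $\supp V_i$, followed by Tao's sparse decomposition \eqref{sparse decomp.} of each $\supp V_i$ into sparse families of balls of explicit radius; only after these two decompositions can the two-cutoff resolvent smoothing and the chaining bound be applied piecewise and summed, and the summability of the resulting geometric series is precisely where the strict inequality $q<d+1$ is used.
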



In the next Subsection, we explain the strategy of the proof of Theorems~\ref{thm. 1}--\ref{thm. 3}.
In Section~\ref{s:liebthirring}, we prove new Schatten norm estimates (Theorem~\ref{vsschattenpointwise}) and apply them to conclude bounds for eigenvalue sums of $-\Delta-V_\omega$ (Theorem~\ref{evsumsrandomschrodinger}).

\subsection{Ideas of the proof of Theorems~\ref{thm. 1}--\ref{thm. 3}}
\label{s:keyideas}

By scaling, it suffices to consider eigenvalues of the form $z=(1+i\epsilon)^2$. Moreover, by \eqref{eq:flls}, it suffices to consider $|\epsilon|\ll1$.
Again, the first step is to use the Birman--Schwinger principle.
For technical reasons, we consider the spectral radius, using Gelfand's formula
\begin{align}
  \label{eq:gelfand}
  {\rm spr}(BS(z)) = \lim_{n\to\infty}\|BS(z)^n\|^{1/n},
\end{align}
instead of $\|BS(z)\|$.
By the decay of $V$ and the uncertainty principle, the resolvents in
\begin{align}
  |V|^{1/2}R(z)VR(z)V...
\end{align}
are expected to be smoothed out in Fourier space. Indeed, by the convolution theorem, we get, for any two balls $B(x_1,R_1)$ and $B(x_2,R_2)$,
\begin{align}
  \label{eq:smoothingresolvent}
  \one_{B(x_1,R_1)}R(z)\one_{B(x_2,R_2)}
  = \one_{B(x_1,R_1)}\F^{-1}\left(\frac{1}{|\xi|^2-z} \ast \gamma_\delta\right)\F \one_{B(x_2,R_2)},
\end{align}
where $\delta^{-1}\geq R_1+R_2+|x_1-x_2|$ and $\gamma\in\cs(\R^d)$ is a Schwartz function with $\check\gamma\in C_c^\infty$.
To simplify the following notation, we denote, for $\delta>0$, by $C^{(\delta)}(D)$ any Fourier multiplier whose symbol obeys
\begin{align}
  |C^{(\delta)}(\xi)| \leq (||\xi|^2-|z||+\delta)^{-1/2}, \quad z=(1+i\epsilon)^2, \ |\epsilon|\ll1.
\end{align}
We now explain some details of the proofs of Theorems~\ref{thm. 1} and \ref{thm. 3}.

\subsubsection{On the proof of Theorem~\ref{thm. 1}}

In the situation of Theorem~\ref{thm. 1}, all resolvents appearing in Gelfand's formula for ${\rm spr}(BS(z))$ can be replaced with $C^{(\delta)}(D)^2$ with $\delta^{-1}\geq 2R$. Thus, to bound ${\rm spr}(BS(z))$, it suffices to estimate the operator norm of the ``elementary operators'' $C^{(\delta)}(D)V_\omega C^{(\delta')}(D)$ for suitable $\delta,\delta'>0$. By elliptic estimates, it suffices to consider those frequencies $|\xi|$ for which $|\xi|\sim|z|\sim1$. By the spectral theorem or Plancherel's theorem, we have, for any $F:\R_+\to\C$,
\begin{align}
  F(|D|) = \int_0^\infty d\lambda\, F(\lambda) \ce_\lambda \ce_\lambda^*,
\end{align}
where we denoted $M_\lambda:=\{\xi\in \R^d:\,|\xi|=\lambda\}$, the Fourier extension and restriction operators
\begin{align*}
  \mathcal{E}_\lambda : L^2(M_\lambda,\rd\sigma_\lambda)\to L^{\infty}(\R^d),
  \quad (\mathcal{E}_\lambda g)(x) = \int_{M_\lambda} \me{2\pi ix\cdot\xi} g(\xi) \,d\sigma_\lambda(\xi),
\end{align*}
and the Leray surface measure $\sigma_\lambda$ on $M_\lambda$. Thus, by the Cauchy--Schwarz inequality,
\begin{align}
  \label{eq:foliation}
  \begin{split}
    & \|C^{(\delta)}(D)\one_{|D|\in[1/2,2]}V_\omega C^{(\delta')}(D)\one_{|D|\in[1/2,2]}\| \\
    & \quad \lesssim \ln^{\frac12}\frac{1}{\delta} \ln^{\frac12}\frac{1}{\delta'} \sup_{\lambda,\lambda'\in[1/2,2]}\|\ce_{\lambda}^* V_\omega \ce_{\lambda'}\|_{L^2(M_{\lambda'})\to L^2(M_\lambda)},
  \end{split}
\end{align}
see also \cite[Lemma~18]{CueninMerz2022}. Thus, it suffices to estimate $\ce_{\lambda}^* V_\omega \ce_{\lambda'}$. The main technical result in \cite{CueninMerz2022} is the following

\begin{theorem}[{\cite[Lemma~15]{CueninMerz2022}}]
  \label{lemma local extension bound}
  Let $q<d+1$, $R\geq h$, and $V_\omega(x)$ be defined as in \eqref{randomization of deterministic V} with $\supp V_\omega\subseteq B(R)$. Then,
  \begin{align}
    \label{eq:lemma local extension bound}
    \sup_{\lambda,\lambda'\in[1/2,2]} \E \|\ce_{\lambda}^* V_\omega \ce_{\lambda'}\|_{L^2(M_{\lambda'})\to L^2(M_\lambda)}
    \lesssim \langle h\rangle^{\frac d2} (\ln\langle R\rangle)^{\frac12} \left(\ln\langle h\rangle + \ln\langle R\rangle\right)^2 \|V\|_{q}.
  \end{align}
  In particular, there are constants $M_0,c>0$ such that for any $M\geq M_0$, the estimate
  \begin{align}
    \label{eq:lemma local extension bound cor}
    \sup_{\lambda,\lambda'\in[1/2,2]} \|\ce_{\lambda}^* V_\omega \ce_{\lambda'}\|_{L^2(M_{\lambda'})\to L^2(M_\lambda)}
    \lesssim M\langle h\rangle^{\frac d2} (\ln\langle R\rangle)^{\frac12} \left(\ln\langle h\rangle +  \ln\langle R\rangle\right)^2 \|V\|_{q}
  \end{align}
  holds for all $\omega$ outside a set of measure at most $\me{-cM^2}$.
\end{theorem}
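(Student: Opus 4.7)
The plan is to exploit the independent mean-zero structure of $V_\omega$ by writing the operator of interest as an operator-valued Rademacher (or Gaussian) sum, bounding it via a non-commutative Khintchine inequality after a Fourier-net reduction, and estimating the resulting deterministic square functions directly by Stein--Tomas. Decompose $V_\omega=\sum_{j\in h\Z^d}\omega_j V_j$ with $V_j:=V\cdot\one_Q((\,\cdot\,-j)/h)$, so that
\[
T_\omega:=\ce_\lambda^* V_\omega\ce_{\lambda'}=\sum_j\omega_j A_j,\qquad A_j:=\ce_\lambda^* V_j\ce_{\lambda'}.
\]
Since $\widehat{V_\omega}$ is essentially constant on spherical caps of angular size $r\sim 1/(h\vee R)$ by the uncertainty principle, the spheres $M_\lambda,M_{\lambda'}$ can be discretized into $N\sim(h\vee R)^{d-1}$ caps; viewing $T_\omega$ as an $N\times N$ matrix, the non-commutative Khintchine inequality gives
\[
(\E\|T_\omega\|^p)^{1/p}\lesssim\sqrt{p+\log N}\;\max\Bigl(\bigl\|\sum_j A_j^* A_j\bigr\|^{1/2},\bigl\|\sum_j A_j A_j^*\bigr\|^{1/2}\Bigr),\qquad p\geq 2,
\]
with $\log N\lesssim\log\langle h\rangle+\log\langle R\rangle$.

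The key step is then a deterministic bound on the square functions. Unpacking $\sum_j A_j A_j^*=\ce_\lambda^*\bigl(\sum_j V_j\,\ce_{\lambda'}\ce_{\lambda'}^* V_j^*\bigr)\ce_\lambda$ and testing against a unit $f\in L^2(M_\lambda)$,
\[
\bigl\langle f,\textstyle\sum_j A_j A_j^* f\bigr\rangle=\sum_j\int_{M_{\lambda'}}|\widehat{V_j^*\ce_\lambda f}(\xi')|^2\,d\sigma_{\lambda'}(\xi').
\]
For each fixed $\xi'$, Cauchy--Schwarz on the cube $Q_j$ (of volume $h^d$) yields $|\widehat{V_j^*\ce_\lambda f}(\xi')|^2\leq h^d\int_{Q_j}|V|^2|\ce_\lambda f|^2$; summing over $j$ and integrating over $M_{\lambda'}$ (whose measure is $\sim 1$), then applying H\"older with $|V|^2\in L^{q/2}$ and the Stein--Tomas extension estimate $\|\ce_\lambda f\|_{L^{2q/(q-2)}(\R^d)}\lesssim \|f\|_{L^2(M_\lambda)}$, valid precisely when $2q/(q-2)\geq 2(d+1)/(d-1)$, i.e., $q\leq d+1$, one obtains
\[
\bigl\|\textstyle\sum_j A_j A_j^*\bigr\|_{\mathrm{op}}\lesssim h^d\|V\|_q^2,
\]
together with the symmetric bound for $\sum_j A_j^* A_j$. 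Inserting into the Khintchine inequality with $p\sim\log N$ gives $\E\|T_\omega\|\lesssim h^{d/2}\|V\|_q\sqrt{\log\langle h\rangle+\log\langle R\rangle}$, which implies \eqref{eq:lemma local extension bound cor} via subgaussian concentration for the Lipschitz functional $\omega\mapsto\|T_\omega\|$ and \eqref{eq:lemma local extension bound} by integrating the tail.

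The main obstacle is the endpoint $q=d+1$, at which the Stein--Tomas constant $\|\ce_\lambda\|_{L^2\to L^{2q/(q-2)}}$ begins to degrade; the additional $(\log\langle h\rangle+\log\langle R\rangle)^{3/2}\cdot(\log\langle R\rangle)^{1/2}$ factors in \eqref{eq:lemma local extension bound} arise from a dyadic/Littlewood--Paley decomposition $V=\sum_k V^{(k)}$ on level sets $\{|V|\sim 2^k\}$, run to trade the sharp Stein--Tomas constant for controlled logarithmic losses and to push $q$ through the entire open interval below $d+1$. A secondary technical point is the rigorous justification of the Fourier-net reduction: one must show that truncating $T_\omega$ to the cap-scale subspace incurs only a bounded multiplicative loss, which follows from the uncertainty principle together with a Schur-type estimate on off-diagonal cap interactions, routine but delicate. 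Finally, the regime $q<2$ (relevant for $d=1$ and the lower part of the range for $d=2$) falls outside the direct H\"older step; it is recovered by the factorization $V_j=|V_j|^{1/2}\bigl(|V_j|^{1/2}V_j/|V_j|\bigr)$ and a further interpolation.
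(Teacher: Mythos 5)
The proposal takes a genuinely different route from the paper. The paper works with the bilinear form $\langle \ce g, V_\omega \ce g'\rangle$ and bounds its supremum over $\|g\|,\|g'\|\leq 1$ via generic chaining: a telescoping expansion over dyadic nets $\F_k$, the dual Sudakov inequality of Pajor--Tomczak-Jaegermann to control the entropy $\ln|\F_k|\lesssim 4^k\ln\langle R\rangle$, and an interpolation between a probabilistic bound and a deterministic Riemann--Lebesgue bound summed over scales $k,k'$ --- the factor $(\ln\langle R\rangle)^{1/2}$ comes from dual Sudakov and the factor $(\ln\langle h\rangle+\ln\langle R\rangle)^2$ from the sum over $(k,k')$. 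You instead write $T_\omega=\sum_j\omega_j A_j$, invoke a non-commutative Khintchine/matrix-Rademacher bound after discretizing to an $N\times N$ matrix with $N\sim R^{d-1}$, and control the two square functions $\sum_j A_jA_j^*$, $\sum_j A_j^*A_j$ deterministically by Cauchy--Schwarz on each $h$-cube plus H\"older plus the Stein--Tomas extension estimate. The square-function computation you carry out is correct for $2<q\leq d+1$ (the exponent algebra $2q/(q-2)\geq 2(d+1)/(d-1)\Leftrightarrow q\leq d+1$ is right), and if the whole argument closed it would actually give a \emph{sharper} logarithmic dependence $(\ln\langle h\rangle+\ln\langle R\rangle)^{1/2}$ than the theorem as stated. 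So this is an interesting alternative that replaces chaining+entropy by matrix concentration+square function, and Bourgain's machinery by Tropp/Lust-Piquard.

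However, two things need flagging. First, the discretization you call ``routine but delicate'' is the load-bearing step and is not worked out: you must reduce the operator on $L^2(M_{\lambda'})\to L^2(M_\lambda)$ to a finite matrix in a way that (i) preserves the operator norm up to constants, (ii) preserves the square-function bounds $\|\sum_j A_jA_j^*\|\lesssim h^d\|V\|_q^2$ (which now requires a \emph{discrete} Stein--Tomas estimate, as in the paper's appeal to Demeter's Proposition~1.29), and (iii) produces the correct matrix dimension so that the $\sqrt{\log N}$ factor is genuinely $\sqrt{\log\langle R\rangle}$. In the paper, the analogue of this reduction (mollification by $\gamma_\delta$ as in \eqref{eq:smoothingresolvent}, a $1/R$-net on $M$, and discrete Stein--Tomas) is a substantial part of the work; in your write-up it is asserted. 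Second, your final paragraph misdiagnoses the source of the paper's log factors and of the restriction $q<d+1$. The Stein--Tomas constant does \emph{not} degrade as $q\to d+1$: the extension estimate $\|\ce_\lambda\|_{L^2(M)\to L^p(\R^d)}$ is uniformly bounded for all $p\geq 2(d+1)/(d-1)$, endpoint included. The paper's extra logs do not come from a Littlewood--Paley decomposition on level sets (no such decomposition appears in the proof of this compactly-supported theorem; it only appears in the proof of Theorem~\ref{thm. 3}). They come from the chaining apparatus itself --- dual Sudakov gives $(\ln\langle R\rangle)^{1/2}$ and summing the interpolated bound over $(k,k')$ gives $(\ln\langle h\rangle+\ln\langle R\rangle)^2$. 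Your hint for the range $q<2$ is in the right spirit but is not an argument yet. Net: promising and genuinely different, but incomplete where it matters, and the account of why the paper loses its logs is wrong.
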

Estimate \eqref{eq:lemma local extension bound cor} follows from \eqref{eq:lemma local extension bound} and the Gaussian tail bound
\begin{align}
  \mathbf{P}(\|X\|>t) \leq \exp\left(-\frac{ct^2}{(\E \|X\|)^2}\right)
\end{align}
for Banach space valued random variables $X$ with symmetric Bernoulli or Gaussian distribution\footnote{We are not aware of tail bounds for Banach space valued random variables with more general sub-Gaussian distributions. This is the reason for our assumption on the distribution of the $\omega_j$ in Theorems~\ref{thm. 1}--\ref{lemma local extension bound}}, see, e.g., \cite[Lemma~12]{CueninMerz2022} and the reference \cite{LedouxTalagrand1991} therein.
Plugging \eqref{eq:lemma local extension bound cor} into \eqref{eq:foliation} and combining it with the previous strategy allows to conclude the proof of Theorem~\ref{thm. 1}.

\subsubsection{On the proof of Theorem~\ref{lemma local extension bound}}

The proof of Theorem~\ref{lemma local extension bound} uses techniques of Bourgain \cite{Bourgain2002,Bourgain2003} in the context of scattering theory of random lattice Schr\"odinger operators with long-range potentials. The basic ingredient to prove Theorem~\ref{lemma local extension bound} is the square root cancellation in the estimate
\begin{align}
  \label{eq:dudley}
  \E \sup_{j\leq N}\sum_{i=1}^j |X_i| \lesssim \sqrt{\log N} \sup_{j\leq N} \sqrt{\sum_{i=1}^j \|X_i\|_{\psi_2}^2}
\end{align}
for a sequence of symmetric Bernoulli or Gaussian random variables $\{X_i\}_{i\in\N}$.
This is stated in \cite[Section~4]{CueninMerz2022}, where we refer to \cite{Vershynin2018} for a reference of the definition of the $\psi_2$-norm and the statement and proof of \eqref{eq:dudley}.
Our goal is to apply \eqref{eq:dudley} to estimate the right-hand side of
\begin{align}
  \label{eq:taskboundopnorm}
  \E \|\ce^* V_\omega \ce\|
  = \E\sup_{g,g'\in L^2(M),\|g\|,\|g'\|\leq1}|\langle \ce g,V_\omega \ce g'\rangle|.
\end{align}
Since $L^2(M)$ has infinite cardinality, \eqref{eq:dudley} cannot be applied directly.
However, some reductions are possible. Recall that we consider a problem where both the frequencies and the positions are localized; indeed $|\xi|\sim1$ and $|x|\leq R$. By the uncertainty principle, we expect locally constant properties in position and frequency on the respective reciprocal scales, i.e., on the unit scale in position space, and on the scale $R^{-1}$ in frequency space.
In particular, we expect that we can discretize the problem in position space on the unit scale and in frequency space on the scale $R^{-1}$, and consider
\begin{align}
  \label{eq:taskboundopnormdiscrete}
  \begin{split}
    & \E\sup_{g,g'\in \ell^2(M\cap\Lambda_R^*),\|g\|,\|g'\|\leq1}|\langle \ce_{\rm d} g,v_\omega \ce_{\rm d} g'\rangle|
  \end{split}
\end{align}
instead of \eqref{eq:taskboundopnorm}, for $v_\omega=v_\omega(n)=\omega_n v_n$, $n\in h\Z^d$, discrete versions $\ce_{\rm d}^*$ and $\ce_{\rm d}$ of the Fourier restriction and extension operators, and a $1/R$-net $\Lambda_R^*\cap M$ of points on $M$.
Following Bourgain, we replace the supremum over the set $\{\ce_{\rm d}g:\,\|g\|_{\ell^2(M\cap\Lambda_R^*)}\leq1\}$ on the right-hand side of \eqref{eq:taskboundopnormdiscrete} by a supremum over a finite set up to an ``entropy'' error. Importantly, we can prove suitable bounds for the logarithm of the cardinality of this finite set, called ``entropy'' \cite[p.~170]{Vershynin2018}.
The idea to achieve this replacement is as follows.
Consider the set $\{\ce_{\rm d}g:\, \|g\|_{\ell^2(M\cap \Lambda_R^*)}\leq1\}$ and cover it with disjoint dyadic boxes of side length $2^{-k}$, $k\in\N_0$, in the $\ell^\infty(\Z^d\cap B(R))$-metric. We collect the centers of these boxes in the set $Z_k:=\{\xi_k(j)\}_{j\in\N_0}$. Then, to approximate any given $\ce_{\rm d} g\in \ell^\infty$ with $\|g\|_{\ell^2(M\cap\Lambda_R^*)}\leq1$, we start from the origin, walk to the center $\xi_0(j_0)$ of a box with side length $2^0$, which is closest to $\ce_{\rm d}g$, then walk to the center $\xi_1(j_1)$ of a box with side length $2^{-1}$, which is closest to $\ce_{\rm d}g$, etc. In this way, we can, for any $g\in \ell^2(M\cap\Lambda_R^*)$ with $\|g\|_2\leq1$, construct a chain of centers $\{\xi_k\}$ of boxes of side length $2^{-k}$ converging to $\ce_{\rm d}g$. Formally, this is just the telescoping series
\begin{align}
  \label{eq:telescope}
  \begin{split}
    \ce_{\rm d}g
    & = \ce_{\rm d}g-\xi_0(j_0)+\xi_0(j_0)
    = \ce_{\rm d}g-\xi_0(j_0)+\xi_0(j_0)-\xi_1(j_1)+\xi_1(j_1) = ... \\
    & =: \ce_{\rm d}g- \xi_0(j_0) + \sum_{k\geq0}\xi^{(k)}, \quad \xi^{(k)} \in \F_k\subseteq Z_k - Z_{k+1},
  \end{split}
\end{align}
with the property that $\|\xi^{(k)}\|_{\ell^\infty}\lesssim 2^{-k}$, $\xi^{(k)}\in\F_k$. To apply \eqref{eq:dudley}, we thus need to compute the cardinality of the subset $\F_k$ of all difference vectors between boxes with centers in $Z_k$ and $Z_{k+1}$ which are as close to each other as possible. To that end, we use the ``dual Sudakov inequality'', first proved by Pajor and Tomczak--Jaegermann \cite{PajorTomczakJaegermann1986} and later, in a simpler fashion, by Bourgain, Milman, and Lindenstrauss \cite[Proposition~4.2]{Bourgainetal1989}. It says that $\ln|Z_k| \lesssim \ln(\langle R\rangle) 4^k$. Thus,
\begin{align}
  \label{eq:dualsudakov}
  \ln|\F_k| \lesssim \ln(\langle R\rangle) 4^k.  
\end{align}
A rigorous implementation of these ideas requires an averaging over translations.
Writing $\e(x\cdot\xi):=\me{2\pi ix\cdot \xi}$, we showed in the proof of \cite[Lemma~15]{CueninMerz2022} that Formula \eqref{eq:telescope} takes the form
\begin{align}
  \sum_\nu e((x_i+y)(\eta_\nu+\tau)) g(\eta_\nu+\tau) = \sum_{k\geq0}\xi_i^{(k)}, \quad \xi^{(k)}\in\F_k,
\end{align}
with $\{x_i\}$ and $\{\eta_\nu\}$ constituting a $1$-net in $\R^d$ and a $R^{-1}$-net $\Lambda_R^*$ of $M$, respectively, $y\in B(0,10)$, $\tau,\tau'\in M\cap B(0,10/R)$, and 
\begin{align}
  \|\xi^{(k)}\|_{\ell^\infty} \lesssim 2^{-k} R^{\frac{d-1}{2}} \|g(\eta_\nu+\tau)\|_{\ell_{\nu}^2},
  \quad
  \|\xi^{(k)}\|_{\ell^{2(d+1)/(d-1)}} \lesssim R^{\frac{d-1}{2}} \|g(\eta_\nu+\tau)\|_{\ell_{\nu}^2}.
\end{align}
Thus, we can bound the right-hand side of \eqref{eq:taskboundopnorm} by
\begin{align}
  \label{eq:connectioncvx}
  \E\sup_{g,g'\in L^2(M),\|g\|,\|g'\|\leq1}|\langle \mathcal{E}^* V_{\omega} \mathcal{E}g,g'\rangle|
  \leq \sum_{k,k'\in \Z_+} \int \E \max_{(\xi,\xi')\in\mathcal{F}_k\times \mathcal{F}_{k'}}|X_{\xi,\xi'}|\,dy \,d\tau\,d\tau',
\end{align}
where the dependence of
\begin{align}
  X_{\xi,\xi'} = X_{\xi,\xi'}(y,\tau,\tau')
  := \sum_{j\in h\Z^d}\omega_j\sum_i \overline{V(x_i+y)\xi_i}\xi_i'
\end{align}
on the variables $y\in B(0,10)$, $\tau,\tau'\in M\cap B(0,10/R)$ is suppressed.
Combining \eqref{eq:dudley} with the dual Sudakov estimate \eqref{eq:dualsudakov} and applying H\"older's inequality repeatedly gives the probabilistic bound
\begin{align}
  \int \mathbf{E}\max_{(\xi,\xi')\in\mathcal{F}_k\times \mathcal{F}_{k'}}|X_{\xi,\xi'}|\,dy\,d\tau\,d\tau'
  \lesssim (\ln \langle R\rangle)^{1/2}h^{d/2} \|V\|_{L^{q}(\R^d)}.
\end{align}
On the other hand, the Riemann--Lebesgue lemma and repeated use of H\"older's inequality gives the deterministic bound
\begin{align}
  \int \max_{(\xi,\xi')\in\mathcal{F}_k\times \mathcal{F}_{k'}}|X_{\xi,\xi'}|dy\,d\tau\,d\tau'
  \lesssim R^{d/q'} 2^{-k-k'}\|V\|_{L^{q}(\R^d)}.
\end{align}
Interpolating between these two bounds yields \eqref{eq:lemma local extension bound}.

\subsubsection{On the proof of Theorem~\ref{thm. 3}}

Even if $V$ is not necessarily compactly supported anymore, we expect the decay of $V$ to smoothen out the resolvents in Gelfand's formula \eqref{eq:gelfand}.
To quantify the decay of $V$, we apply a horizontal dyadic decomposition (see, e.g., \cite[Theorem~6.6]{Tao2006Notes}), which is reminiscent to the definition of Lorentz spaces. More precisely, we write
\begin{align}
  V=\sum_{i\in\Z_+}V_i,\quad V_i=V\mathbf{1}_{H_i\geq |V|\geq H_{i+1}},\quad H_i=\inf\{t>0:\,|\{|V|>t\}|\leq 2^{i-1}\}.
\end{align}
The widths of the supports of $V_i$ are approximately $2^i$ and $\|H_i2^{i/q}\|_{\ell^r_i(\Z_+)}\sim \|V\|_{L^{q,r}}$, where $L^{q,r}$ denotes a Lorentz space. As explained in \eqref{eq:smoothingresolvent}, the smoothing of the resolvents using two spatial cut-offs like $\one_{\Omega_1}(x)R(z)\one_{\Omega_2}(x)$ depends on the diameter of the $\Omega_j$ and their relative distance. While the sizes of the $\supp(V_i)$ are known, their diameter and relative distance is not accessible yet. To control these properties, we apply a sparse decomposition to each $\supp(V_i)$. We say that for $N\in\N$, $\gamma,R>0$, a family $\{B(x_k,R)\}_{k=1}^N$ is called $\gamma$-sparse if the centers $x_k$ are $(RN)^\gamma$-separated.
For fixed $K\gg1$ and $\gamma>0$, whose values will be determined at the end of the argument, we let $K_i=\mathcal{O}(K2^{i/K})$ be the number of sparse families used to cover $\supp(V_i)$, we let $N_i=\mathcal{O}(2^i)$ be the number of balls inside any sparse family, and we let $R_i=\mathcal{O}(2^{i\gamma^K})$ be the radius of the balls.
Then, according to Tao \cite[Lemma~3.3]{Tao1999} (see also \cite[Section~2.2]{Choetal2022}),
\begin{align}
  \label{sparse decomp.}
  V_i=\sum_{j=1}^{K_i}\sum_{k=1}^{N_i}V_{ijk},
\end{align}
where, for fixed $i,j$, the $V_{ijk}$ are supported on a sparse collection of balls $\{B(x_k,R_i)\}_{k=1}^{N_i}$.
Plugging the decomposition \eqref{sparse decomp.} into Gelfand's formula \eqref{eq:gelfand} allows us to proceed similarly as in the case of compactly supported potentials. We refer to \cite[Sections~7.2--7.4]{CueninMerz2022} for details.

\section{Eigenvalue sums}
\label{s:liebthirring}

Being able to locate a region in the complex plane for which we can say where all eigenvalues of $-\Delta-V_\omega$ are located, it is natural to consider the accumulation of these eigenvalues.
This will lead us to estimate the singular values of 
\begin{align}
  \cv_{M_\lambda}^{(\omega)} :=
  \mathcal{E}_\lambda^* V_{\omega} \mathcal{E}_\lambda, \quad \lambda>0.
\end{align}
That is, we will estimate $\|\cv_M^{(\omega)}\|_{\cs^{p}}$ for some $p\geq1$, with the $p$-th Schatten norm $\|T\|_{\cs^p(\ch)}^p:=\sum_{n\geq1}s_n(T)^p$ and the singular values $\{s_n(T)\}_{n\in\N}$ of some compact operator $T$ on a Hilbert space $\ch$. We also write $\|T\|_{\cs^{p,\infty}(\ch)} := \sup_m s_m(T)m^{\frac1p}$ for the weak $p$-th Schatten norm.
For the sake of simplicity, and, because our methods are not likely to yield optimal results for $L^q$-potentials,
we will only consider compactly supported or pointwise decaying potentials in this section.

Our approach to bound $s_k(\cv_{M_\lambda}^{(\omega)})$ consists of two steps. First, we adapt the estimate \eqref{eq:lemma local extension bound} for $\E\|\cv_M^{(\omega)}\|$. Then, using an observation recorded, e.g., in \cite[(3.4.13)]{DyatlovZworski2019}, we obtain bounds for all other singular values of $\cv_M^{(\omega)}$ by essentially comparing them to the singular values of powers of the Laplace--Beltrami operator $-\Delta_M$; see Theorem~\ref{vsschattenpointwise} for the final result. As an application, we prove estimates for sums of eigenvalues of $-\Delta-V_\omega$ (Theorem~\ref{evsumsrandomschrodinger}).

\subsection{Alternative estimate for $\|\cv_M^{(\omega)}\|$}
\label{s:opnorm}

We first give an alternative bound for $\|\cv_M^{(\omega)}\|$ to that in Theorem~\ref{lemma local extension bound}, which is suitable for pointwise decaying potentials. Recall \eqref{eq:connectioncvx} with
\begin{align*}
  X_{\xi,\xi'} = X_{\xi,\xi'}(y,\tau,\tau') = \sum_{j\in h\Z^d}\omega_j\sum_i \overline{V(x_i+y)\xi_i}\xi_i',
\end{align*}
$\{x_i\}$ constituting a $1$-net in $\R^d$, $y\in B(0,10)$, and
\begin{align}
  \|\xi^{(k)}\|_{\ell^\infty} \lesssim 2^{-k} R^{\frac{d-1}{2}} \|g(\eta_\nu+\tau)\|_{\ell_{\nu}^2},
\end{align}
where $\tau\in M\cap B(0,10/R)$ and the points $\eta_\nu$ form an $1/R$-net $\Lambda_R^*$ of $M$.
For the proof of the following proposition, we also use
\begin{align}
  \label{cardinality of j}
  |\{j\in h\Z^d:\, x_i+y\in [j,j+h)^d\}| & = 1 \quad\mbox{for each } i
\end{align}
and 
\begin{align}
  \label{cardinality of i}
  |\{i:\, x_i+y\in [j,j+h)^d\}| & \leq h^d\quad\mbox{for each } j\in h\Z^d.
\end{align}

\begin{proposition}
  \label{localextensionboundptwisepotential}
  Let $R\geq h>0$ and $V\in L^\infty(\R^d)$ with $\supp V \subseteq B(R)$. Then,
  \begin{align}
    \label{eq:localextensionboundptwisepotential1a}
    & \int \E \max_{\mathcal{F}_k\times \mathcal{F}_{k'}}|X_{\xi,\xi'}|\rd y\rd\tau\rd\tau'
      \lesssim R^{1/2}(\ln \langle R\rangle)^{1/2}h^{d/2}\|V\|_\infty, \\
    \label{eq:localextensionboundptwisepotential1b}
    & \int \max_{\mathcal{F}_k\times \mathcal{F}_{k'}}|X_{\xi,\xi'}|\rd y\rd\tau\rd\tau'
      \lesssim R\, 2^{-k-k'} \|V\|_\infty.
  \end{align}
  Consequently,
  \begin{align}
    \label{eq:localextensionboundptwisepotential2}
    \begin{split}
      \E \|\cv_M^{(\omega)}\|
      \lesssim \sum_{k,k'}\int \E \max_{\mathcal{F}_k\times \mathcal{F}_{k'}}|X_{\xi,\xi'}|\rd y\rd\tau\rd\tau'
      \lesssim R^{1/2} \langle h\rangle^{d/2}(\ln \langle R\rangle)^{5/2}\|V\|_\infty,
    \end{split}
  \end{align}
  and, for all $\epsilon>0$ and $V\in \langle x\rangle^{-(1/2+\epsilon)}L^\infty$,
  \begin{align}
    \label{eq:localextensionboundptwisepotential2cor}
    \begin{split}
      \E \|\cv_M^{(\omega)}\| \lesssim_\epsilon \langle h\rangle^{d/2} \|\langle x\rangle^{\frac12+\epsilon}V\|_\infty.
    \end{split}
  \end{align}
\end{proposition}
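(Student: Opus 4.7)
The proof should follow the structure of \cite[Lemma~15]{CueninMerz2022} (i.e., Theorem~\ref{lemma local extension bound}) closely, with the $L^q$-norm of $V$ replaced by $\|V\|_\infty$ and the support restriction $\supp V\subseteq B(R)$ entering the estimates explicitly. For the probabilistic bound \eqref{eq:localextensionboundptwisepotential1a}, fix $(y, \tau, \tau')$ and use \eqref{cardinality of j} to write $X_{\xi,\xi'} = \sum_{j\in h\Z^d} \omega_j b_j$ where $b_j = \sum_{i: j(i) = j} \overline{V(x_i+y)\xi_i}\xi_i'$ and $j(i)$ is the unique index with $x_i + y \in j + [0, h)^d$. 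Sub-Gaussianity of the $\omega_j$ yields $\|X_{\xi,\xi'}\|_{\psi_2}^2 \lesssim \sum_j |b_j|^2$, and Cauchy--Schwarz inside each $j$-block combined with \eqref{cardinality of i} reduces this to $\|V\|_\infty^2 \langle h\rangle^d \sum_i |\xi_i|^2 |\xi_i'|^2$, where the $i$-sum is over the $O(R^d)$ lattice points inside $\supp V$. Combining Dudley's inequality \eqref{eq:dudley} with the dual Sudakov cardinality bound $\ln|\F_k\times\F_{k'}| \lesssim \ln\langle R\rangle (4^k + 4^{k'})$ controls $\E\max_{\F_k\times\F_{k'}}|X_{\xi,\xi'}|$ pointwise in $(y,\tau,\tau')$; integrating over the translation parameters $y\in B(0, 10)$ and $\tau, \tau'\in M\cap B(0, 10/R)$ (of measures $\sim 1$ and $\sim R^{-(d-1)}$ respectively) then yields \eqref{eq:localextensionboundptwisepotential1a}.

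For the deterministic bound \eqref{eq:localextensionboundptwisepotential1b}, apply the triangle inequality and the uniform bound on $|\omega_j|$ to obtain $|X_{\xi,\xi'}| \leq \|V\|_\infty \sum_i |\xi_i\xi_i'|$, with the $i$-sum again having $O(R^d)$ terms thanks to the support restriction. Plugging in the $\ell^\infty$-bound $\|\xi^{(k)}\|_\infty \lesssim 2^{-k} R^{(d-1)/2}\|g\|_{\ell^2_\nu}$ and integrating over the translation parameters against the measure factor $\sim R^{-2(d-1)}$ produces \eqref{eq:localextensionboundptwisepotential1b}.

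To deduce \eqref{eq:localextensionboundptwisepotential2} from \eqref{eq:connectioncvx}, take the minimum of the two estimates at each $(k, k')$-level: the probabilistic bound, being independent of $(k, k')$, is used for $k + k' \lesssim \ln\langle R\rangle$ and contributes a $(\ln\langle R\rangle)^2$ factor from the sum over this dyadic window; the deterministic estimate, decaying as $2^{-k-k'}$, handles the tail and sums geometrically, with the two balancing at $k + k' \sim \log R$. For \eqref{eq:localextensionboundptwisepotential2cor}, decompose $V = \sum_{n\geq 0} V_n$ with $V_n = V\one_{\{2^{n-1}\leq \langle x\rangle < 2^n\}}$, so that $\supp V_n \subseteq B(2^n)$ and $\|V_n\|_\infty \leq 2^{-n(1/2+\epsilon)}\|\langle x\rangle^{1/2+\epsilon}V\|_\infty$; applying \eqref{eq:localextensionboundptwisepotential2} with $R = 2^n$ to each piece and summing via the triangle inequality yields a convergent series $\sum_n 2^{-n\epsilon} n^{5/2}$, establishing \eqref{eq:localextensionboundptwisepotential2cor}.

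The main obstacle will be the careful bookkeeping of $R$-powers in the probabilistic bound. The factors $R^{(d-1)/2}$ from the $\ell^\infty$-bound on $\xi^{(k)}$, $R^d$ from the support-restricted lattice-point count, and $R^{-2(d-1)}$ from the translation measures must combine---together with a local $L^2$-type Fourier-extension bound (i.e., a discrete Stein--Tomas estimate on the $\xi^{(k)}$) sharpening the trivial $\ell^2$-bound inherited from $\ell^\infty$---to produce the claimed $R^{1/2}$ rather than a larger power of $R$. Additionally, the $(k,k')$-cut-off in the interpolation of the probabilistic and deterministic estimates must be tuned so that only the polylogarithmic loss $(\ln\langle R\rangle)^{5/2}$ in \eqref{eq:localextensionboundptwisepotential2} is incurred.
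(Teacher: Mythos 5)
Your proposal follows the paper's approach essentially exactly: Dudley's inequality \eqref{eq:dudley} combined with the dual Sudakov entropy count \eqref{eq:dualsudakov} for the probabilistic bound, H\"older for the deterministic bound, interpolation between the two for \eqref{eq:localextensionboundptwisepotential2}, and a dyadic annulus decomposition for \eqref{eq:localextensionboundptwisepotential2cor}. You also correctly identify the local $L^2$ extension estimate (the paper implements this via the discrete Plancherel bound \eqref{eq:discreterestrictionplancherel}) as the ingredient that sharpens the crude lattice-point count. However, the exponent bookkeeping in your ``main obstacle'' paragraph does not close. In the probabilistic bound the relevant combination is $\|\xi\|_{\ell^\infty}\,\|\xi'\|_{\ell^2_i(|x_i|<R)} \lesssim 2^{-k}R^{(d-1)/2}\cdot R^{d/2}$ (one $\ell^\infty$-factor of $R^{(d-1)/2}$ and one Plancherel-factor of $R^{d/2}$, not a raw $R^d$ count), and each $\tau$-integral contributes $R^{-(d-1)/2}$ rather than the raw measure $R^{-(d-1)}$, because the integrand carries $\|g(\eta_\nu+\tau)\|_{\ell^2_\nu}$ and the gain comes from Cauchy--Schwarz in $\tau$ against $\|g\|_{L^2(M)}\leq1$. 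With this, the powers land on $R^{(d-1)/2+d/2-(d-1)}=R^{1/2}$ for \emph{every} $d$, whereas your tally $R^{(d-1)/2}\cdot R^d\cdot R^{-2(d-1)}=R^{(3-d)/2}$ is dimension dependent. Relatedly, the $\ell^\infty$-bound $\|\xi^{(k)}\|_{\ell^\infty}\lesssim 2^{-k}R^{(d-1)/2}\|g\|_{\ell^2_\nu}$ must be used in the probabilistic estimate as well (on the factor with $\max\{k,k'\}$), to cancel the $2^{\max\{k,k'\}}$ from $\sqrt{\ln|\F_k\times\F_{k'}|}$ and render \eqref{eq:localextensionboundptwisepotential1a} independent of $k,k'$; you invoke it only in the deterministic step. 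Finally, a uniform pointwise bound on $|\omega_j|$ is unavailable in the Gaussian case; the deterministic estimate enters after taking $\E$, as in \eqref{eq:Xinterpolated}.
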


\begin{proof}
  Formula~\eqref{eq:localextensionboundptwisepotential2cor} follows from \eqref{eq:localextensionboundptwisepotential2} by a dyadic decomposition. In turn, \eqref{eq:localextensionboundptwisepotential2} follows from \eqref{eq:localextensionboundptwisepotential1a}--\eqref{eq:localextensionboundptwisepotential1b}.
  To prove these inequalities, we follow \cite{CueninMerz2022} and assume $\|V\|_\infty=1$ without loss of generality.
  We first prove \eqref{eq:localextensionboundptwisepotential1a}. For $\ln N := \ln(\langle R\rangle) \cdot \max\{4^k,4^{k'}\}$, Formula~\eqref{eq:dudley} yields
  \begin{align}
    \E\max_{\F_k\times\F_{k'}}|X_{\xi,\xi'}|
    \lesssim \sqrt{\ln N} \left(\sum_{j\in h\Z^d} \left| \sum_i \overline{V(x_i+y) \xi_i}\, \xi_i'\right|^2 \right)^{1/2}.
  \end{align}
  By $\supp V\subseteq B(R)$ and Cauchy--Schwarz,
  \begin{align}
    \left|\sum_i \overline{V(x_i+y) \xi_i}\, \xi_i' \right|
    \lesssim \sum_{|x_i|\lesssim R}|\xi_i||\xi_i'|
    \leq \left(\sum_{|x_i|\lesssim R}|\xi_i|^2\right)^{1/2} \left(\sum_{|x_i|\lesssim R}|\xi_i'|^2\right)^{1/2}.
  \end{align}
  Thus,
  \begin{align}
    \label{eq:localextensionboundptwisepotentialaux3}
    \begin{split}
      \E\max_{\F_k\times\F_{k'}}|X_{\xi,\xi'}|
      & \lesssim \sqrt{\ln N} \left(\sum_{j\in h\Z^d} \left(\sum_{|x_i|\lesssim R}|\xi_i|^2\right) \left(\sum_{|x_i'|\lesssim R}|\xi_i'|^2\right) \right)^{1/2} \\
      & \lesssim \sqrt{\ln N} \left\|\sum_{|x_i|<R}|\xi_i|^2\right\|_{\ell_j^1}^{1/2} \left\|\sum_{|x_i|<R}|\xi_i'|^2\right\|_{\ell_j^\infty}^{1/2} \\
      & \lesssim \sqrt{\ln N} \|\xi_i\|_{\ell_i^\infty} \, \|\xi_i'\|_{\ell_i^2(|x_i|<R)} \cdot h^{d/2}.
    \end{split}
  \end{align}
  To proceed, we estimate $\|\xi_i\|_{\ell_i^2(|x_i|<R)}$.
  Recall that the $\xi_i$ are defined via $\sum_\nu e((x_i+y)(\eta_\nu+\tau))g(\eta_\nu+\tau) = \sum_{k\geq0}\xi_i^{(k)}$, i.e., they belong to the range of the discrete Fourier restriction operator.
  Let $\phi_R(\xi)=R^d\phi(R\xi)\in\cs(\R^d)$ such that $\supp\phi_R\subseteq B(1)$ and $\hat\phi\geq\one_{B(0,1)}$. Then, by the discrete Stein--Tomas theorem \cite[Proposition~1.29 and (1.18)]{Demeter2020} (see also \cite[Section~3]{CueninMerz2022}),
  \begin{align}
    \label{eq:discreterestrictionplancherel}
    \begin{split}
      & \|\sum_{\mu\in\Lambda_R^*} a(\mu) e(\mu\cdot x)\|_{L^2(|x|<R)}
      \leq \|\sum_{\mu\in\Lambda_R^*} a(\mu) \hat\phi_R(x) e(\mu\cdot x)\|_{L^2(\R^d)} \\
      & \quad = \|\sum_{\mu\in\Lambda_R^*} a(\mu) \phi_R(\xi+\nu)\|_{L^2(\R^d)}
      \leq \|a(\mu)\|_{\ell_\mu^2(\Lambda_R^*)} \|\phi_R\|_{L^2(\R^d)} \\
      & \quad \lesssim R^{d/2} \|a(\mu)\|_{\ell_\mu^2(\Lambda_R^*)}.
    \end{split}
  \end{align}
  Combining \eqref{eq:localextensionboundptwisepotentialaux3} with \eqref{eq:discreterestrictionplancherel}, we get, by symmetry between $k$ and $k'$,
  \begin{align}
    \label{eq:Xprobabilistic}
    \int_{B(0,10)} dy\, \iint_{B(0,10/R)^2} d\tau\,d\tau'\, \E\max_{\F_k\times\F_{k'}}|X_{\xi,\xi'}|
    \lesssim \sqrt{\ln \langle R\rangle} h^{d/2} R^{1/2}.
  \end{align}
  This concludes the probabilistic bound \eqref{eq:localextensionboundptwisepotential1a}.

  We now prove the deterministic bound \eqref{eq:localextensionboundptwisepotential1b}. By H\"older's inequality,
  \begin{align}
    |X_{\xi,\xi'}| \lesssim \|V(x_i+y)\|_{\ell_i^1} \|\xi\|_\infty \|\xi'\|_\infty
    \lesssim R^{2d-1} 2^{-k-k'}.
  \end{align}
  Therefore,
  \begin{align}
    \label{eq:Xdeterministic}
    \int_{B(0,10)} dy\, \iint_{(B(0,10/R)\cap M)\times (B(0,10/R)\cap M)} d\tau\,d\tau'\, |X_{\xi,\xi'}|
    \lesssim R 2^{-k-k'}.
  \end{align}
  Combining \eqref{eq:Xprobabilistic} and \eqref{eq:Xdeterministic} gives
  \begin{align}
    \label{eq:Xinterpolated}
    \int dy\,d\tau\,d\tau' \E |X_{\xi,\xi'}|
    \lesssim \min\{\sqrt{\ln \langle R\rangle} h^{d/2} R^{1/2}, R 2^{-k-k'}\}
  \end{align}
  and therefore, using \cite[Lemma~25]{CueninMerz2022},
  \begin{align}
    \begin{split}
      \E\|\ce^* V_\omega \ce\|
      & \lesssim R \sum_{k,k'}\min\{2^{-k-k'},R^{-\frac12}\ln(\langle R\rangle)^{\frac12}h^{\frac d2}\} 
        \lesssim R^{\frac12} (\ln \langle R\rangle)^{\frac52} \langle h\rangle^{\frac d2}.
    \end{split}
  \end{align}
  The proof is concluded.
\end{proof}

\subsection{Bounds for singular values of $\cv_M^{(\omega)}$}

We now prove bounds for the singular values $s_k(\cv_M^{(\omega)})$ of $\cv_M^{(\omega)}$.

\begin{theorem}
  \label{vsschattenpointwise}
  Let $R\geq h>0$ and $\nu\in(0,d-1]$. Then, for all $\lambda>0$ and $V\in L^\infty$ with $\supp V\subseteq B(R)$,
  \begin{align}
    \label{eq:vsschattenpointwisescaled}
    \E\|\ce_\lambda^* V_\omega \ce_\lambda\|_{\cs^{\frac{d-1}{\nu},\infty}}
    & \lesssim (\lambda R)^{\frac12+\nu} \sqrt{\ln \langle\lambda R\rangle} \langle \lambda h\rangle^{\frac d2} (\ln\langle \lambda R\rangle + \ln\langle \lambda h\rangle)^2 \lambda^{-d} \|V\|_\infty.
  \end{align}
  Consequently, for all $\epsilon>0$, $R_0\geq1$ and $\lambda>R_0^{-1}$,
  \begin{align}
    \label{eq:vsschattenpointwisescaledcor}
    \E\|\ce_\lambda^* V_\omega \ce_\lambda\|_{\cs^{(d-1)/\nu,\infty}}
    \lesssim_\epsilon R_0^{\epsilon} \langle \lambda h\rangle^{d/2} \lambda^{1/2+\nu+\epsilon-d} \|\langle x\rangle^{1/2+\epsilon+\nu}V\|_\infty.
  \end{align}
\end{theorem}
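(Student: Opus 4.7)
My plan is to combine the operator-norm estimate of Proposition \ref{localextensionboundptwisepotential} with a comparison to the singular values of powers of the Laplace--Beltrami operator $-\Delta_{M_\lambda}$, following \cite[(3.4.13)]{DyatlovZworski2019}. Concretely, for any compact $T$ on $L^2(M_\lambda)$, the inequality $s_n(AB)\leq s_n(A)\|B\|$ applied to the factorization $T = (1-\Delta_{M_\lambda})^{-\nu/2}\cdot(1-\Delta_{M_\lambda})^{\nu/2}T$ gives
\begin{align*}
\|T\|_{\cs^{(d-1)/\nu,\infty}} \leq \|(1-\Delta_{M_\lambda})^{-\nu/2}\|_{\cs^{(d-1)/\nu,\infty}}\cdot \|(1-\Delta_{M_\lambda})^{\nu/2} T\|_{\mathrm{op}},
\end{align*}
which I apply with $T = \cv_{M_\lambda}^{(\omega)}$.

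The first factor is controlled by Weyl's law: the eigenvalues of $-\Delta_{M_\lambda}$ are $k(k+d-2)/\lambda^2$ with multiplicities of order $k^{d-2}$, so a direct count yields $\|(1-\Delta_{M_\lambda})^{-\nu/2}\|_{\cs^{(d-1)/\nu,\infty}}\lesssim\lambda^\nu$, the exponent $(d-1)/\nu$ naturally matching the dimension of $M_\lambda$.

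The second factor is the main point. I claim that
\begin{align*}
\E\|(1-\Delta_{M_\lambda})^{\nu/2}\cv_{M_\lambda}^{(\omega)}\|_{\mathrm{op}} \lesssim R^\nu\cdot \E\|\cv_{M_\lambda}^{(\omega)}\|_{\mathrm{op}}.
\end{align*}
The reason is that, by the Funk--Hecke/Bochner identity, the plane wave $\xi\mapsto e^{-2\pi i\xi\cdot x}$ restricted to $M_\lambda$ has its spherical-harmonic content concentrated on degrees $k\lesssim \lambda|x|$: the degree-$k$ coefficient is a Bessel function $J_{k+d/2-1}(2\pi\lambda|x|)$, which decays super-polynomially once $k\gg\lambda|x|$. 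Since $(1-\Delta_{M_\lambda})^{\nu/2}$ multiplies degree-$k$ spherical harmonics by $(1+k(k+d-2)/\lambda^2)^{\nu/2}\lesssim\langle x\rangle^\nu$ on this relevant range and negligibly beyond, applying $(1-\Delta_{M_\lambda})^{\nu/2}$ under the integral sign in
\begin{align*}
\cv_{M_\lambda}^{(\omega)}g(\xi) = \int_{\R^d} V_\omega(x)\,(\ce_\lambda g)(x)\, e^{-2\pi i\xi\cdot x}\,dx
\end{align*}
amounts, up to a negligible high-frequency tail, to inserting a pointwise weight $\langle x\rangle^\nu \lesssim R^\nu$ into the integrand. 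Re-running the Dudley chaining estimates in the proof of Proposition \ref{localextensionboundptwisepotential} with this weight is routine, as the probabilistic bounds \eqref{eq:localextensionboundptwisepotential1a}--\eqref{eq:localextensionboundptwisepotential1b} are monotone in $|V|$ and the entropy estimate \eqref{eq:dualsudakov} is unaffected.

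Substituting the bound of Proposition \ref{localextensionboundptwisepotential} (with appropriate scaling in $\lambda$) yields \eqref{eq:vsschattenpointwisescaled}. The corollary \eqref{eq:vsschattenpointwisescaledcor} then follows by a standard dyadic decomposition: write $V = \sum_{j\geq 0} V_j$ with $\supp V_j \subseteq \{R_0 2^j \leq \langle x\rangle < R_0 2^{j+1}\}$, apply \eqref{eq:vsschattenpointwisescaled} to each $V_j$ with $R\sim R_0 2^{j+1}$ and $\|V_j\|_\infty\leq (R_0 2^j)^{-(1/2+\epsilon+\nu)}\|\langle x\rangle^{1/2+\epsilon+\nu} V\|_\infty$, then sum the resulting geometric series in $j$ (the factor $R_0^\epsilon$ accounts for convergence near $j=0$).

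The main obstacle I anticipate is rigorously transferring the pointwise spherical-frequency concentration of plane waves into an operator-norm bound that preserves the probabilistic cancellation: one must quantitatively truncate at spherical-frequency $\lambda R$ on $M_\lambda$ and control the high-frequency remainder uniformly in $\omega$, while keeping the weight $\langle x\rangle^\nu$ inside the sums that the chaining argument of Proposition \ref{localextensionboundptwisepotential} manipulates.
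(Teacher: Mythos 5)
Your overall strategy matches the paper's: compare the singular values of $\cv_{M_\lambda}^{(\omega)}$ to those of a negative power of $\langle-\Delta_{M_\lambda}\rangle$ via Weyl asymptotics, then bound the resulting weighted operator norm by re-running the chaining argument of Proposition~\ref{localextensionboundptwisepotential}. (The paper factors $\langle-\Delta_M\rangle^{-\nu/4}$ symmetrically on both sides rather than $\langle-\Delta_M\rangle^{-\nu/2}$ on one side; both give the same exponent $k^{-\nu/(d-1)}$ and the choice is cosmetic.)

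The genuine gap is in your central claim
\begin{align*}
\E\|\langle -\Delta_{M_\lambda}\rangle^{\nu/2}\cv_{M_\lambda}^{(\omega)}\|\lesssim R^\nu\,\E\|\cv_{M_\lambda}^{(\omega)}\|,
\end{align*}
which you justify by asserting that, via Funk--Hecke, applying $\langle-\Delta_M\rangle^{\nu/2}$ ``amounts to inserting a pointwise weight $\langle x\rangle^\nu$ into the integrand,'' so that one can invoke monotonicity in $|V|$. This reduction is not correct as stated. Applying a power of the Laplace--Beltrami operator to $\ce$ does \emph{not} produce the operator $\langle x\rangle^\nu\ce$ plus a negligible error; it produces a \emph{modulated} extension operator $\ce_a$ whose kernel carries an $(x,\xi)$-dependent symbol $a(x,\xi)$ with $\|a(x,\cdot)\|_{C^N}\lesssim\langle x\rangle^{\nu/2}$ (or $\langle x\rangle^\nu$ in your one-sided normalization). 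The residual $\xi$-dependence of $a$ is precisely the obstruction you flag at the end, and it cannot be absorbed by the monotonicity observation, because the discrete Plancherel/Stein--Tomas step \eqref{eq:discreterestrictionplancherel} does not commute with an $x$- \emph{and} $\xi$-dependent multiplier. The paper resolves this by Fourier-expanding $a(x,\xi)=\sum_{y\in\Z^d}\hat a(x,y)\,e(-\xi\cdot y)$, exploiting the rapid decay $\sup_{x\in B(R)}|\hat a(x,y)|\lesssim R^{\nu/2}\langle y\rangle^{-N}$ to reduce to a superposition of translated extension operators, and proving the modified discrete Plancherel estimate \eqref{eq:vsschattenpointwisescaledaux}
\begin{align*}
\Bigl\|\sum_{\mu\in\Lambda_R^*} b(\mu)\, a(x,\mu)\, e(\mu\cdot x)\Bigr\|_{L^2(|x|<R)}
\lesssim R^{d/2+\nu/2}\|b\|_{\ell^2(\Lambda_R^*)},
\end{align*}
after which the chaining argument of Proposition~\ref{localextensionboundptwisepotential} goes through verbatim with $R^{d/2}$ replaced by $R^{d/2+\nu/2}$. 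Your Funk--Hecke concentration heuristic (degree $k\lesssim\lambda R$) is the right intuition and is consistent with this, but the Fourier-series expansion of the symbol is the device that actually closes the argument; without it the step from ``spherical-frequency concentration'' to ``pointwise weight on $V$'' is unjustified. The dyadic decomposition you propose for \eqref{eq:vsschattenpointwisescaledcor} is correct and matches the paper.
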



\begin{proof}
  Formula~\eqref{eq:vsschattenpointwisescaledcor} follows from \eqref{eq:vsschattenpointwisescaled} by a dyadic decomposition and using that $\ln(\langle \lambda R\rangle)^{5/2}\lesssim (\lambda R)^\epsilon\,R_0^\epsilon$ since $R\geq1$ without loss of generality. To prove \eqref{eq:vsschattenpointwisescaled}, it suffices to consider $\lambda=1$ by scaling; see also \cite[Proposition~4.1, Lemma~4.2]{Cuenin2022}. Using the Weyl asymptotic for the Laplace--Beltrami operator $-\Delta_M$, we get, for any $\nu>0$,
  \begin{align}
    \begin{split}
      s_k(\cv_M^{(\omega)})
      & \leq s_k(\langle -\Delta_M\rangle^{-\nu/4})^2 \cdot \|\langle -\Delta_M\rangle^{\nu/4} \ce^* V_\omega \ce \langle -\Delta_M\rangle^{\nu/4}\| \\
      & \lesssim k^{-\nu/(d-1)} \|\langle -\Delta_M\rangle^{\nu/4} \ce^* V_\omega \ce \langle -\Delta_M\rangle^{\nu/4}\|.
    \end{split}
  \end{align}
  To bound the operator norm on the right-hand side, we proceed as in Section~\ref{s:opnorm}. By an integration by parts,
  \begin{align}
    \ce\langle -\Delta_M \rangle^{\nu/4} =: \ce_{a},
  \end{align}
  where $\ce_{a}$ is a modulated Fourier extension operator, which acts as
  \begin{align}
    (\ce_{a} g)(x) = \int_M d\sigma(\xi) e(x\cdot\xi) a(x,\xi) g(\xi),
  \end{align}
  and $a(x,\xi)$ is a smooth symbol, which satisfies
  \begin{align}
    \|a(x,\cdot)\|_{C^N} \lesssim_N \langle x\rangle^{\nu/2}, \quad x\in\R^d, \ N\in\N_0.
  \end{align}
  By inspection of the proof of Proposition~\ref{localextensionboundptwisepotential}, we see that \eqref{eq:vsschattenpointwisescaled} would follow from 
  \begin{align}
    \label{eq:vsschattenpointwisescaledaux}
    \|\sum_{\mu\in\Lambda_R^*} b(\mu) \cdot a(x,\mu) \cdot e(\mu\cdot x)\|_{L^2(|x|<R)}
    \lesssim R^{d/2+\nu/2}\|b(\mu)\|_{\ell_\mu^2(\Lambda_R^*)}.
  \end{align}
  When $a(x,\xi)=c(x)d(\xi)$ factorizes, we proceed as in \eqref{eq:discreterestrictionplancherel} and obtain
  \begin{align}
    \begin{split}
      & \|\sum_{\mu\in\Lambda_R^*}b(\mu) a(x,\mu) e(\mu\cdot x)\|_{L_x^2(B(R))}
      \leq \|\sum_{\mu\in\Lambda_R^*}b(\mu) c(x)d(\mu)\hat\phi_R e(\mu\cdot x)\|_{L_x^2(B(R))} \\
      & \quad \leq \|c\|_{L^\infty(B(R))} \|\sum_{\mu\in\Lambda_R^*}b(\mu) d(\mu) \phi_R(\xi+\mu)\|_{L_\xi^2(\R^d)} \\
      & \quad \leq R^{\frac\nu2} \|b(\mu) d(\mu)\|_{\ell^2(\Lambda_R^*)} \|\phi_R\|_{L^2(\R^d)} 
        \lesssim R^{\frac{d+\nu}{2}} \|b(\mu)\|_{\ell_\mu^2(\Lambda_R^*)}\|d(\mu)\|_{\ell_\mu^\infty},
    \end{split}
  \end{align}
  as desired. For general $a(x,\mu)$, we decompose
  \begin{align}
    a(x,\xi) = \sum_{y\in\Z^d}\hat a(x,y) e(-\xi\cdot y)
  \end{align}
  into a Fourier series with
  \begin{align}
    \hat a(x,y) := 2^{-d} \int_{[-1,1]^d} a(x,\xi)e(\xi\cdot y)\,d\xi
  \end{align}
  obeying
  \begin{align}
    \sup_{x\in B(R)}|\hat a(x,y)| \lesssim R^{\nu/2} \langle y\rangle^{-N}.
  \end{align}
  Thus,
  \begin{align}
    \begin{split}
      & \|\sum_{\mu\in\Lambda_R^*}b(\mu) a(x,\mu) e(\mu\cdot x)\|_{L_x^2(B(R))} \\
      & \quad = \|\sum_{\mu\in\Lambda_R^*} \sum_{y\in\Z^d} b(\mu) e(\mu\cdot(x-y)) \hat a(x,y) \hat\phi_R \|_{L_x^2(B(R))} \\
      & \quad \leq \sum_{y\in \Z^d}\sup_{x\in B(R)}|\hat a(x,y)| \sup_{y\in\Z^d} \|\sum_{\mu\in\Lambda_R^*}b(\mu) e(-\mu\cdot y) \phi_R(\xi+\mu)\|_{L_\xi^2(\R^d)} \\
      & \quad \lesssim R^{d/2+\nu/2} \sup_{y\in\Z^d} \|b(\mu) e(-\mu\cdot y)\|_{\ell_\mu^2(\Lambda_R^*)}
        \leq R^{d/2+\nu/2}\|b(\mu)\|_{\ell^2(\Lambda_R^*)}.
    \end{split}
  \end{align}
  This concludes the proof of \eqref{eq:vsschattenpointwisescaledaux} and thereby that of \eqref{eq:vsschattenpointwisescaled} and Theorem~\ref{vsschattenpointwise}.
\end{proof}


We remark that for deterministic $V$, the proof leading to \eqref{eq:vsschattenpointwisescaled} gives
\begin{align}
  \label{eq:vsschattenpointwisedeterministic}
  \|\ce^* V \ce\|_{\cs^{(d-1)/\nu,\infty}}
  & \lesssim \|\langle x\rangle^{1+\nu+\epsilon} V\|_\infty, \quad \nu\in(0,d-1], \ \epsilon>0.
\end{align}
Up to replacing the weak with the strong Schatten norm and the additional power $\langle x\rangle^\epsilon$, this bound coincides with \cite[Proposition~8.1.3]{Yafaev2010}.

\subsection{Eigenvalue sums of random Schr\"odinger operators}

We now apply the Schatten bound in Theorem~\ref{localextensionboundptwisepotential} to estimate sums of eigenvalues of $-\Delta-V_\omega$, where $\supp V_\omega\subseteq B(R)$ for some $R>0$.
Let $\delta(z):=\dist(z,[0,\infty))$ be the distance between $z\in\C$ and the essential spectrum of $-\Delta-V_\omega$. We will use the following abstract result by Frank.

\begin{theorem}[{\cite[Theorem~3]{Frank2018E}}]
  \label{frankabstract}
  Let $\sigma>0$, $p\geq1$. Let $K(z)$, $z\in\C\setminus[0,\infty)$, be an analytic family of operators satisfying $\|K(z)\|_p \leq M|z|^{-\sigma}$ for all $z\in\C\setminus[0,\infty)$ and some $M\geq0$. Let $z_j\in \C\setminus[0,\infty)$ be the eigenvalues of $1+K$ of finite type, repeated according to their algebraic multiplicity. Then, for all $\epsilon>0$,
  \begin{align}
    \sum_{j} \delta(z_j)|z_j|^{-\frac12+\frac12(2p\sigma-1+\epsilon)_+}
    \lesssim_{\sigma,p,\epsilon} M^{\frac{1}{2\sigma}(1+(2p\sigma-1+\epsilon)_+)}.
  \end{align}
\end{theorem}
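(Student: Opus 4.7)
The plan is to realize the eigenvalues $z_j$ as the zeros (with multiplicity) of a scalar analytic function on $\C\setminus[0,\infty)$ and then feed a sharp growth estimate into a Blaschke-type zero-counting inequality. This is the strategy developed by Borichev--Golinskii--Kupin and refined by Hansmann, and it is the natural engine behind Frank's result.

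Concretely, with $p_0=\lceil p\rceil$ I would consider the regularized Fredholm determinant
\begin{equation*}
d(z):={\det}_{p_0}(I+K(z)),\qquad z\in\C\setminus[0,\infty).
\end{equation*}
Analyticity of $z\mapsto K(z)$ as a $\cs^{p_0}$-valued map plus continuity of the regularized determinant on $\cs^{p_0}$ give analyticity of $d$, and the Gohberg--Krein / Weinstein--Aronszajn formula identifies the order of vanishing of $d$ at any point of $\C\setminus[0,\infty)$ with the algebraic multiplicity of that point as an eigenvalue of $1+K$. Hence counting zeros of $d$ with multiplicity is the same as counting the $z_j$ with algebraic multiplicity.

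Next I would quantify the growth of $d$. The universal bound $|{\det}_{p_0}(I+A)|\le\exp(\Gamma_{p_0}\|A\|_{p_0}^{p_0})$, combined with the hypothesis $\|K(z)\|_p\le M|z|^{-\sigma}$ (after an admissible enlargement from $p$ to $p_0$ if $p$ is non-integer), yields
\begin{equation*}
\log|d(z)|\;\lesssim\; M^{p}\,|z|^{-p\sigma},\qquad z\in\C\setminus[0,\infty),
\end{equation*}
together with $d(\infty)=1$ since $\|K(z)\|_p\to 0$ as $|z|\to\infty$. I would then apply a Borichev--Golinskii--Kupin / Hansmann type inequality for holomorphic functions $f$ on $\C\setminus[0,\infty)$ with $f(\infty)=1$ and $\log|f(z)|\le C\,\Phi(z)$ for a subharmonic majorant $\Phi$ with prescribed power-law behaviour at $0$ and $\infty$. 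Specialising the weight to $\Phi(z)=|z|^{-p\sigma}$ produces exactly a Blaschke-type sum
\begin{equation*}
\sum_{j}\delta(z_j)\,|z_j|^{-\frac12+\frac12(2p\sigma-1+\epsilon)_+}\;\lesssim\;M^{\frac{1}{2\sigma}(1+(2p\sigma-1+\epsilon)_+)},
\end{equation*}
where the $(\cdot)_+$ and the arbitrary $\epsilon>0$ encode the degeneracy at the borderline $2p\sigma=1$, at which point the exponent on $|z_j|$ would otherwise become critical with respect to the Blaschke condition at infinity.

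The main obstacle I expect is the precise bookkeeping of the conformal map sending $\C\setminus[0,\infty)$ to the unit disk $\D$: under this map $\infty$ and the endpoint $0$ collapse to two boundary points, and the growth weight $|z|^{-p\sigma}$ transforms into a product of boundary singularities whose exponents must be tracked to pin down the precise powers in the final inequality and to account for the $\epsilon$-loss. Everything else --- the choice of regularization order, the determinantal bound, and the identification of zeros of $d$ with eigenvalues --- is essentially mechanical once the BGK/Hansmann lemma is set up in the right geometry.
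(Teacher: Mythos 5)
Your proposal follows essentially the same route as Frank's proof in \cite{Frank2018E}: introduce the regularized determinant $d(z)=\det_{\lceil p\rceil}(1+K(z))$, whose zeros (with multiplicity) are the eigenvalues of finite type of $1+K$; bound its growth via the determinantal inequality to obtain $\log|d(z)|\lesssim M^{p}|z|^{-p\sigma}$; and then invoke a Blaschke-type zero-counting lemma for holomorphic functions on $\C\setminus[0,\infty)$ of Borichev--Golinskii--Kupin/Hansmann type, obtained by conformally mapping the cut plane to the disk and tracking the boundary singularities at the images of $0$ and $\infty$. The one point worth spelling out is that the exponent $p$ (rather than $\lceil p\rceil$) in the growth bound relies on the refined estimate $|\det_{\lceil p\rceil}(1+A)|\leq\exp\bigl(\Gamma_{p}\|A\|_{p}^{p}\bigr)$ valid for $\lceil p\rceil-1<p\leq\lceil p\rceil$, which is exactly what your parenthetical ``admissible enlargement from $p$ to $p_0$'' is gesturing at.
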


In the following, we only consider eigenvalues $z$ with $R_0^{-1}\leq |z|^{1/2} \leq h^{-1}$ for any fixed $R_0\geq1$ and $|\im z|\ll1$. The reason for the restriction $|z|^{1/2} \leq h^{-1}$ is that the randomization becomes ineffective for frequencies that are significantly larger than the reciprocal randomization length scale.
In this case, the estimate \eqref{eq:vsschattenpointwisescaled} implies for any $\epsilon>0$,
\begin{align}
  \label{eq:vsschattenpointwisescaledsimpler}
  \begin{split}
    \E\|\ce_\lambda^* V_\omega \ce_\lambda\|_{\cs^{(d-1)/(\nu-\epsilon)}}
    & \leq \E\|\ce_\lambda^* V_\omega \ce_\lambda\|_{\cs^{(d-1)/\nu,\infty}} \\
    & \lesssim_{\epsilon,\nu,d} R_0^\epsilon \lambda^{1/2+\nu+\epsilon-d}\|\langle x\rangle^{1/2+\nu+\epsilon} V\|_\infty.
  \end{split}
\end{align}
We choose $\nu=2\epsilon$.
Repeating the previous arguments that led to \eqref{eq:foliation}, we get, for any $\epsilon\in(0,d-1]$,
\begin{align}
  \label{eq:smoothedbirmanschwingerschatten}
  \begin{split}
    & \E\|C_z^{(\frac1R)}(D)V_\omega C_z^{(\frac1R)}(D)\|_{\cs^{\frac{d-1}{\epsilon}}} 
     \lesssim R_0^{\epsilon} |z|^{-3/4+3\epsilon/2} \|\langle x\rangle^{1/2+3\epsilon} V\|_\infty,
  \end{split}
\end{align}
where $C_z^{(\delta)}(D)$ is any Fourier multiplier whose symbol satisfies
\begin{align}
  \label{eq:smoothedhalfresolvent}
  |C_z^{(\delta)}(\xi)| \leq (||\xi|^2-|z||+\delta)^{-1/2}.
\end{align}
%
We now use Theorem~\ref{frankabstract} to bound eigenvalue sums of $-\Delta-V_\omega$. To that end, we use $\one_{B(R)}R(z)\one_{B(R)}=\one_{B(R)}\tilde R(z)\one_{B(R)}$, where $\tilde R(z)$ is a smoothed out resolvents with the symbol of $|\tilde R(z)|^{1/2}$ obeying the bound \eqref{eq:smoothedhalfresolvent}, and the resolvent identity
\begin{align}
  \begin{split}
    & (-\Delta-V_\omega-z)^{-1} \\
    & \quad = R(z) + R(z)V_\omega R(z) + R(z)V_\omega \tilde R(z)^{\frac12}(1-K(z))^{-1} |\tilde R(z)|^{1/2}V_\omega R(z),
  \end{split}
\end{align}
where
\begin{align}
  K(z):=|\tilde R(z)|^{\frac12}V_\omega \tilde R(z)^{\frac12}.
\end{align}
Thus, the eigenvalues $z$ of $-\Delta-V_\omega$ are those points for which $1-K(z)$ fails to be invertible.
Using Theorem~\ref{frankabstract} with
\begin{align}
  p = \frac{d-1}{\epsilon}
  \quad \text{and} \quad
  \sigma = \frac34 - \frac{3\epsilon}{2}>0,
\end{align}
and the bound \eqref{eq:smoothedbirmanschwingerschatten} for $\|K(z)\|$ yields the following

\begin{theorem}
  \label{evsumsrandomschrodinger}
  Let $\epsilon\in(0,1/2)$, $h>0$, and $V\in \langle x\rangle^{-\frac12-3\epsilon}L^\infty(\R^d)$. Let $\{z_j\}\subseteq\C\setminus[0,\infty)$ denote the set of discrete eigenvalues of finite algebraic multiplicities of $-\Delta-V_\omega$, repeated according to their algebraic multiplicity. Then, there are constants $M_0,c>0$ such that for all $R_0\geq1$, and $M\geq M_0$,
  \begin{align}
    \begin{split}
      & \sum_{R_0^{-1/2}\leq |z_j|\leq h^{-1/2}} \delta(z_j)|z_j|^{-\frac12+\frac12(\frac{3(d-1)}{2\epsilon}-3d+2+\epsilon)_+} \\
      & \quad \lesssim_{d,\nu,\epsilon} (R_0^{\epsilon} \|\langle x\rangle^{1/2+3\epsilon} V\|_\infty)^{\frac{1}{3/2-3\epsilon}(1+(\frac{3(d-1)}{2\epsilon}-3d+2+\epsilon)_+)}
    \end{split}
  \end{align}
  holds for all $\omega$ outside a set of measure at most $\me{-cM^2}$.
  In particular, for these $\omega$, there are $c_1=c_1(R_0,\epsilon,d)>0$ and $c_2=c_2(\epsilon,d)>1$ such that
  \begin{align}
    \sum_{R_0^{-1/2}\leq |z_j|\leq h^{-1/2}} \delta(z_j)
    \leq c_1 \|\langle x\rangle^{1/2+3\epsilon} V\|_\infty^{c_2}.
  \end{align}
\end{theorem}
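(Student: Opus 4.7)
The plan is to apply Frank's abstract Theorem~\ref{frankabstract} to the analytic family $K(z)=|\tilde R(z)|^{1/2}V_\omega \tilde R(z)^{1/2}$ introduced just above, using the Schatten estimate~\eqref{eq:smoothedbirmanschwingerschatten} as the main input. With $p=(d-1)/\epsilon$ and $\sigma=3/4-3\epsilon/2$ (both positive for $\epsilon\in(0,1/2)$), the expectation bound reads
\begin{align*}
  \E\|K(z)\|_{\cs^p} \lesssim M_V|z|^{-\sigma}, \qquad M_V := R_0^\epsilon \|\langle x\rangle^{1/2+3\epsilon} V\|_\infty,
\end{align*}
on the annulus of interest. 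The first step is to upgrade this to a deterministic estimate. Since $V_\omega$ is linear in the symmetric Bernoulli or Gaussian variables $\omega_j$, the Banach-space-valued tail inequality used to pass from~\eqref{eq:lemma local extension bound} to~\eqref{eq:lemma local extension bound cor} yields, for each fixed $z$,
\begin{align*}
  \mathbf{P}\bigl(\|K(z)\|_{\cs^p} > M\cdot M_V |z|^{-\sigma}\bigr) \leq e^{-cM^2}.
\end{align*}
I would then promote this to a uniform-in-$z$ bound by discretizing $z$ on a lattice covering the annulus whose mesh is a small negative power of $(h,R_0)$, taking a union bound over the resulting polynomial number of points, and interpolating by continuity of $z\mapsto\|K(z)\|_{\cs^p}$. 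The polynomial factor from the union bound is absorbed into a redefinition of $M$.

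Once the uniform Schatten bound $\|K(z)\|_{\cs^p}\leq MM_V|z|^{-\sigma}$ is in hand for all $z$ in the annulus, Theorem~\ref{frankabstract} yields
\begin{align*}
  \sum_j \delta(z_j)|z_j|^{-\frac12+\frac12(2p\sigma-1+\epsilon)_+} \lesssim (MM_V)^{\frac{1}{2\sigma}(1+(2p\sigma-1+\epsilon)_+)}.
\end{align*}
A direct computation gives $2p\sigma=3(d-1)/(2\epsilon)-3(d-1)$, hence $2p\sigma-1+\epsilon=3(d-1)/(2\epsilon)-3d+2+\epsilon$, while $1/(2\sigma)=1/(3/2-3\epsilon)$; these exponents match those in the theorem exactly. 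The ``in particular'' statement then follows by observing that on the annulus the weight $|z_j|^{-1/2+(\cdots)_+/2}$ is bounded below by a positive constant depending only on $R_0,\epsilon,d$, so dropping it yields the simplified bound on $\sum_j\delta(z_j)$, with $c_1,c_2$ absorbing the outer power and the $R_0^\epsilon$-factor.

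The main obstacle is the uniform-in-$z$ upgrade of the Gaussian tail bound, which is a pointwise statement, to a bound holding on a continuum of spectral parameters, as demanded by Frank's theorem. The discretization-continuity argument sketched above handles this on the bounded annulus, but it is precisely this step that forces the restriction to $R_0^{-1/2}\leq|z_j|\leq h^{-1/2}$ in the conclusion: for $|z_j|$ beyond $h^{-1/2}$ the randomization becomes ineffective (the frequency exceeds the reciprocal randomization length), and for $|z_j|$ below $R_0^{-1/2}$ the weight $\langle x\rangle^{1/2+3\epsilon}$ no longer suffices to control the Schatten estimate driving the argument.
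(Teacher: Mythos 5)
Your proposal is essentially the route the paper takes: the analytic family $K(z)=|\tilde R(z)|^{1/2}V_\omega\tilde R(z)^{1/2}$, the Schatten input \eqref{eq:smoothedbirmanschwingerschatten}, the choice $p=(d-1)/\epsilon$, $\sigma=3/4-3\epsilon/2$, and a direct application of Theorem~\ref{frankabstract}; your arithmetic for $2p\sigma-1+\epsilon$ and $1/(2\sigma)$ checks out and reproduces the displayed exponents exactly.

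The one place you diverge from the paper is in how the pointwise-in-$z$ expectation bound is turned into a deterministic bound holding simultaneously on the whole annulus. You propose discretizing $z$, applying the Gaussian tail inequality at each lattice point, taking a union bound, and extending by continuity. This works in principle, but it is weaker than what the theorem asserts: the union bound introduces a prefactor of order $\mathrm{poly}(R_0,h^{-1})$ in the exceptional measure, and absorbing it into the Gaussian exponent forces $M_0\gtrsim\sqrt{\ln R_0}$, whereas the stated $M_0$ must be uniform in $R_0$. The paper's intended argument (carried out in \cite{CueninMerz2022}) is instead to build the supremum over $z$ (equivalently over $\lambda,\lambda'\in[1/2,2]$ and dyadic scales) into the chaining estimate \eqref{eq:dudley} itself, so that one obtains an $\E\sup_z$ bound at essentially no cost beyond the logarithms already present; the Banach-space-valued tail inequality is then applied once, to the supremum random variable, yielding a single exceptional $\omega$-set with no extra $R_0$-dependence. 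You should also record a quantitative Lipschitz bound for $z\mapsto\|K(z)\|_{\cs^p}$ if you want the discretization step to be airtight. For the ``in particular'' clause, note that dropping the weight $|z_j|^{-1/2+\frac12(\cdots)_+}$ at constant cost depending only on $R_0,\epsilon,d$ requires the exponent to be nonnegative, i.e.\ $(\cdots)_+\geq1$, which holds only for $\epsilon$ below an explicit threshold; this is a restriction the paper itself only flags implicitly via the remark about $\epsilon_d$.
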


Note that there is $\epsilon_d>0$ such that $\frac{3(d-1)}{2\epsilon}-3d+2+\epsilon>0$ if and only if $\epsilon<\epsilon_d$.

\def\cprime{$'$}

\end{document}